\theoremstyle{plain}
\newtheorem{thm}{\noindent\bf Theorem}
\newtheorem{cor}{\noindent\bf Corollary}
\newtheorem{lem}{\noindent\bf Lemma}
\newtheorem{prop}{\noindent\bf Proposition}
\theoremstyle{remark}
\newtheorem{rmk}{\noindent \bf \textit{Remark}}
\newtheorem{exam}{\noindent \bf Example}
\newcommand{\D}{\displaystyle}
\newcommand{\non}{\nonumber}
\newcommand{\bP}{\mathbb{P}}
\newcommand{\bE}{\mathbb{E}}
\newcommand{\wo}{W^{(\omega)}}
\newcommand{\zo}{Z^{(\omega)}}
\newcommand{\zoh}{\widehat{Z}^{(\omega)}}
\newcommand{\ro}{R^{(\omega)}}
\newcommand{\ho}{H^{(\omega)}}
\newcommand{\woq}{W^{(\omega+\delta)}}
\newcommand{\hoq}{H^{(\omega+\delta)}}
\begin{document}

%
%

\title{How long does the surplus stay close to its historical  high?}
\author{ Bo Li, Yun Hua and Xiaowen Zhou}

\AtEndDocument{\bigskip{\footnotesize%
  \textsc{School of Mathematics and LPMC, Nankai University, Tianjin, P.R.China} \par
  \textit{E-mail address}, Bo Li: \texttt{libo@nankai.edu.cn} \par
  \addvspace{\medskipamount}
  \textsc{School of Mathematics and LPMC, Nankai University, Tianjin, P.R.China} \par
  \textit{E-mail address}, Yun Hua: \texttt{Huayun@mail.nankai.edu.cn} \par
  \addvspace{\medskipamount}
  \textsc{Department of Mathematics and Statistics, Concordia University} \par
  \textit{E-mail address}, Xiaowen Zhou: \texttt{xiaowen.zhou@concordia.ca}
}}

\date{}

\maketitle

\begin{abstract}
In this paper we find the Laplace transforms of the weighted occupation times for a spectrally negative L\'evy surplus process to spend below its running maximum up to the first exit times. The results are expressed in terms of generalized  scale functions. For step weight functions, the Laplace transforms can be further expressed in terms of scale functions.
\end{abstract}
%

\section{Introduction}\label{intro}

In risk theory, in addition to ruin behaviors, the surplus process's behaviors before ruin are also of interest.  Intuitively, it is an ideal situation if the surplus has typically small downward fluctuations from its historical   high.   It is thus interesting to know how long the surplus stays near its running maximum up to certain times. The amount of such time can serve  as  a criterion  for performance of the surplus process. We are not aware of any previous work along this line, which
 motivates our study of occupation times associated to the running maximum of a spectrally negative L\'evy process.

Since the work of \cite{Landriault2011:occupationtime:levy}, the occupation times  have been studied with different approaches in a series of papers for spectrally negative L\'evy processes and related processes such as the reflected or refracted spectrally negative L\'evy processes; see \cite{ZhouBo:localtime} for a summary of these results. So far all of these results concern the occupation times over deterministic intervals.
It is therefore also interesting, from the point of view of occupation time theory, to study the occupation time  over an interval with random boundaries. To our best knowledge, such occupation times have not been studied for spectrally negative L\'evy processes before, and our work represents the first attempt in this direction.

In this paper we consider the Laplace transforms of weighted occupation times the spectrally negative L\'evy process spends near its running maximum.  By considering the time it spends below the running maximum,
we can relate the problem with a problem on reflected L\'evy process. To this end we modify the approach of
\cite{BoLi:sub1}.  Taking use of Feynman-Kac type equations, see e.g. III.19 of \cite{Rogers:book:v1}, we express  the desired Laplace transforms using a generalized scale function, which is the unique solution to an integral equation involving the scale function and the weight function. Similar arguments to obtain Laplace transforms of weighted occupation times for  refracted spectrally negative L\'evy processes can be found in \cite{LiZh18}.
  When the weight function is a step function, the generalized scale function can be further expressed in terms of iterated integrals of the scale functions for spectrally negative L\'evy processes.

This paper is structured as follows. After the introduction in Section \ref{intro}, the main results and examples are presented in Section \ref{main}. Proofs of the main results are deferred to Section \ref{proof}.

\section{Main results}\label{main}

Let $X$ be a spectrally negative L\'{e}vy process,
$S:=\{S_{t}:=\sup_{s\in[0,t]}X_{s}, t\geq0\}$ be the running maximum process of $X$,
and $Y:=S-X=\{S_{t}-X_{t}, t\geq0\}$ be the reflected process from the running  maximum.
In this paper, we are interested in the
Laplace transforms of weighted occupation times of $Y$ up to the first passage times of $X$.
More precisely,
let $\omega$ be a nonnegative, locally bounded measurable function on $\mathbb{R}$,
we want to identify expressions of Laplace transforms
\[
\bE_{x}\big(e^{-L(\tau_{b}^{+})}; \tau_{b}^{+}<\tau_{0}^{-}\big)
\quad\text{and}\quad
 \bE_{x}\big(e^{-L(\tau_{0}^{-})}; \tau_{0}^{-}<\tau_{b}^{+}\big),
\]
where $\D L(t):=\int_{0}^{t}\omega(Y_{s})\,ds$ denotes the $\omega$-weighted occupation time near $S$,
and where the first passage times of $X$ are defined by
\begin{equation}
\tau_{x}^{+}:=\inf\{t>0, X_{t}>x\}
\quad\text{and}\quad
\tau_{x}^{-}:=\inf\{t>0, X_{t}<x\}
\end{equation}
with the convention $\inf\emptyset:=\infty$.
Notice that, for the case of $\omega(z)=\mathbf{1}(z\in[a,b))$ for some $b>a>0$,
$$\D L(t)=\int_{0}^{t}\mathbf{1}(S_{r}-b<X_{r}\leq S_{r}-a)\,dr$$
 is the occupation time process $X$ spends in a random interval below its maximum process $S$, which is also studied in examples at the end of this section.

In the fluctuation theory for L\'evy processes, the interested quantities are often expressed using scale functions $W^{(q)}$ and $Z^{(q)}$, where for $q\geq 0$, $W^{(q)}$ is a nonnegative and increasing function satisfying $W^{(q)}(x)=0$ for $x<0$ and
\[\int_0^\infty W^{(q)}(x)e^{-\lambda x}dx=\frac{1}{\psi(\lambda)-q}, \,\, \lambda>\Phi(q), \]
and
\[\D Z^{(q)}(x):=1+ q\int_{0}^{x}W^{(q)}(y)\,dy.\]
Here $\psi(\lambda):=\log \bE e^{\lambda X_1}$ for $\lambda\geq 0$ denotes the Laplace exponent for $X$ and  $\Phi(q):=\inf\{\lambda>0, \psi(\lambda)>q\}$ denotes the right inverse function of $\psi$. In addition,
\[\psi(\lambda)=\mu\lambda+\frac{1}{2}\sigma^2\lambda^2+\int_{(0,\infty)}(e^{-\lambda x}-1+\lambda x\mathbf{1}_{(x<1)})\Pi(dx),\]
where the $\sigma$-finite L\'evy measure $\Pi$ on $(0, \infty)$ satisfies $\int_{(0, \infty)} 1\wedge x^2 \Pi(dx)<\infty$.
Write $W$ and $Z$ for $W^{(0)}$ and $Z^{(0)}$, respectively. We refer to \cite{Kyprianou2014:book:levy} for more detailed introduction on scale functions.

To express our results, we need the so called $\omega$-scale function first introduced in \cite{BoLi:sub1},
which is defined as the unique locally bounded function satisfying the following equation
\begin{equation}\label{eqn:defn:w:1}
\wo(x,y)=W(x-y)+ \int_{y}^{x}W(x-z)\omega(z)\wo(z,y)\,dz, \,\, x, y\in \mathbb{R}.
\end{equation}
The $\omega$-scale function is further studied in \cite{ZhouBo:localtime} and shown to satisfy a dual version of the above equation.
\begin{equation}\label{eqn:defn:w:2}
\wo(x,y)=W(x-y)+\int_{y}^{x}\wo(x,z)\omega(z)W(z-y)\,dz;
\end{equation}
see Lemma 2 of \cite{ZhouBo:localtime}.

Throughout this paper we always assume that the derivative $W^{\prime}(x)$ is continuous  for $x\in (0, \infty)$, which holds if process $X$ has a Brownian component or the L\'evy measure allows a density.
We refer the readers to \cite{chan2011:scalefunction:smooth} for more detailed discussion on the smoothness of scale functions.
For convenience we also assume that function $\omega$ is  right continuous with left limit.

For $x>y$ denote by
\[
\begin{split}
\wo_{1}(x,y)
:=& \lim_{z\to x+}\frac{\wo(z,y)-\wo(x,y)}{z-x}\\
\text{and}\quad
\wo_{2}(x,y)
:=& \lim_{z\to y+}\frac{\wo(x,z)-\wo(x,y)}{z-y}
\end{split}
\]
the right partial derivatives of $\wo(x,y)$ on  $x$ and $y$, respectively.
One can check that
\begin{align}
\wo_{1}(x,y)=&\ W'(x-y)+ \int_{y}^{x}W'(x-z)\omega(z)\wo(z,y)\,dz+
W(0)\omega(x)\wo(x,y),\non\\
\label{eqn:wy}
\wo_{2}(x,y)=&\ -W'(x-y)- \int_{y-}^{x}\wo(x,z)\omega(z)W(dz-y)
\end{align}
where $W(dz)$ is the Stieltjes measure induced by $W$ with $W(\{0\})=W(0)$ and it is known that $W(0)>0$ if process $X$ has sample paths of bounded variation.

Given $\wo$ we also need to introduce function
\begin{equation}\label{eqn:defn:zh}
\zoh(x,y):=1+\int_{y}^{x}\omega(z)\wo(z,y)\,dz
\end{equation}
for $x, y\in
\mathbb{R}$.
The right partial derivatives of $\zoh$ are given by
\[\zoh_{1}(x,y):= \lim_{z\rightarrow x+}\frac{\zoh(z,y)-\zoh(x,y)}{z-x} =\omega(x)\wo(x,y)\]
and
\[\zoh_{2}(x,y):=  \lim_{z\rightarrow y+}\frac{\zoh(x,z)-\zoh(x,y)}{z-y}
=\int_{y}^{x}\omega(z)\wo_{2}(z,y)\,dz-\omega(y)W(0).\]

\begin{rmk}
We remark that, from  equations \eqref{eqn:defn:w:1} and \eqref{eqn:defn:w:2},
$\wo$ is absolutely continuous with respect to Lebesgue measure in both $x$ and $y$.
But  it may fail to be differentiable for general weight function $\omega$.
However, since $\omega$ is assumed to be right-continuous with left limit, both
the right derivative and  left derivative of $\wo(x,\cdot)$  exist and
the integral equation \eqref{eqn:wy} can be proved directly for $x>y$.
Moreover, $\wo_{2}(x,\cdot)$ is continuous at $y_{0}$ if and only if $\omega$ is continuous at $y_{0}$,
and in such a case, $\wo(x,\cdot)$ is differentiable at $y_{0}$. The same is true for $\wo_{1}$, $\zo_{1}$ and $\zo_{2}$.
\end{rmk}

For simplicity we write $\wo(x)=\wo(x,0)$, $\zoh(x)=\zoh(x,0)$ and write
\[
\wo_{1}(x):=\wo_{1}(x,0)=W'(x)+ \int_{0}^{x}W'(x-z)\omega(z)\wo(z)\,dz+ W(0)\omega(x)\wo(x),
\]
\begin{equation}\label{wo_2}
\wo_{2}(x):=\ \wo_{2}(x,0)= -W'(x)- \int_{0-}^{x}\wo(x,z)\omega(z)W(dz),
\end{equation}
\[\zoh_{1}(x):=\ \zoh_{1}(x,0)=\omega(x)\wo(x),\]
\[\zoh_{2}(x):=\ \zoh_{2}(x,0)=\int_{0}^{x}\omega(z)\wo_{2}(z)\,dz-\omega(0)W(0).\]

Define for $u>0$
\begin{equation}
\label{defn:ho}
\ho(u):=\exp\Big(-\int_{1}^{u} \frac{\wo_{2}(z)}{\wo(z)}\,dz\Big),
\end{equation}
where for $0<u<1$ the integral $\int_1^u $ is understood as $-\int_u^1 $.

We first present a result on potential density. Note that the exit time involved is for process $X$.

\begin{prop}\label{lem:3}
For any $0\leq x<z<b$ and $z>y>0$, we have
\begin{align}\label{eqn:thm1:ro+}
 \ro(x;dz,dy)&:=
\int_{0}^{\infty} \bE_{x}\Big(e^{-L(t)}; S_{t}\in dz, Y_{t}\in dy, t<\tau_{b}^{+}\wedge\tau_{0}^{-}\Big)\,dt \non\\
&= \frac{\ho(x)}{\ho(z)}
\Big(\frac{\wo_{2}(z)}{\wo(z)}\wo(y)- \wo_{2}(y)\Big)\,dy\,dz
\end{align}
and $$\D\ro(x;dz,\{0\})= \frac{\ho(x)}{\ho(z)} W(0)\,dz.$$
\end{prop}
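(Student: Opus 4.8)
The plan is to exploit that the pair $(S_t,Y_t)$ is a strong Markov process and that the running maximum $S$ serves as a local time at $0$ for the reflected process $Y=S-X$. Because $X$ is spectrally negative, the set $\{t:Y_t=0\}=\{t:X_t=S_t\}$ carries a local time that may be normalized to be $S$ itself, and the excursions of $Y$ away from $0$ are indexed by the maximum level $s=S$. The event $\{S_t\in dz,\,Y_t\in dy,\,t<\tau_b^+\wedge\tau_0^-\}$ then decomposes according to the unique excursion straddling time $t$, which sits at some max-level $z\in(x,b)$. This suggests the factorization
$$\ro(x;dz,dy)=\underbrace{\bE_x\big(e^{-L(\tau_z^+)};\tau_z^+<\tau_0^-\big)}_{\text{survival of the climb }x\to z}\ \times\ \underbrace{G(z,dy)}_{\text{weighted excursion occupation at level }z}\,dz,$$
and the whole proof amounts to identifying the two factors and justifying the decomposition by the compensation (exponential) formula of excursion theory.

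First I would establish the survival factor $\bE_x\big(e^{-L(\tau_z^+)};\tau_z^+<\tau_0^-\big)=\ho(x)/\ho(z)$ for $0\le x<z$. The strong Markov property applied when the maximum first reaches an intermediate level $w\in(x,z)$ gives the multiplicative identity $p(x,z)=p(x,w)\,p(w,z)$, whence $p(x,z)=g(x)/g(z)$ for some increasing $g$. It remains to identify the infinitesimal killing rate $\kappa(z):=-\partial_w\log p(z,w)\big|_{w=z^+}$ per unit increase of the maximum, which collects the two effects incurred by the excursion sitting at level $z$: ruin (the excursion of $Y$ overshooting $z$, i.e.\ $X$ reaching $0$) and the accumulated Feynman--Kac weight $\int\omega(Y)$. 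Using \eqref{eqn:defn:w:1}--\eqref{eqn:wy} one computes $\kappa(z)=-\wo_2(z)/\wo(z)$, which by \eqref{defn:ho} is exactly $\ho'(z)/\ho(z)$; hence $g=\ho$. The bookkeeping is cleanest if one reduces the climb of the maximum to a two-sided exit problem for the reflected process weighted by $e^{-L}$ and invokes the $\omega$-scale-function identities, the $W'(z)$ part of $-\wo_2(z)$ being the ruin rate and the integral part the weight rate; in the unweighted case $\omega\equiv0$ this recovers $\kappa(z)=W'(z)/W(z)$ and $p(x,z)=W(x)/W(z)$.

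Second I would compute the weighted occupation of a single excursion at max-level $z$, before it either returns to $0$ or triggers ruin. Freezing $S=z$, the excursion of $Y$ from $0$ is killed upon reaching $z$ (equivalently $X$ hitting $0$), and I want the $\omega$-weighted occupation density at $Y=y\in(0,z)$ under the It\^o measure. This is an entrance-from-$0$, killed-at-$z$ resolvent, and the $\omega$-scale-function calculus should produce the Green's-function form
$$G(z,dy)=\Big(\frac{\wo_2(z)}{\wo(z)}\wo(y)-\wo_2(y)\Big)\,dy,\qquad 0<y<z,$$
which has the expected structure $A(y)-\tfrac{A(z)}{\wo(z)}\wo(y)$ with $A=-\wo_2$, and therefore vanishes at the absorbing boundary $y=z$. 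The point mass at $y=0$ is separate: only for paths of bounded variation does $Y$ spend positive time at $0$, and the rate of this time per unit of local time $S$ is $W(\{0\})=W(0)$, giving $G(z,\{0\})=W(0)$. Multiplying the two factors and reading off $dz$ from the local-time parametrization yields both displayed formulas.

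The main obstacle is the rigorous execution of the excursion-occupation computation in the second step: one must evaluate, under the It\^o excursion measure of $Y$ at level $z$, the expectation of $\int_0^\zeta e^{-\int_0^r\omega(Y_u)\,du}\mathbf 1(Y_r\in dy)\,dr$ on the event that the excursion stays below $z$, and show it equals $G(z,dy)$. I expect to obtain this as an entrance-law limit of the weighted two-sided resolvent of $X$ on $(0,z)$, differentiated at the upper boundary, with the $\omega$-scale functions replacing $W$; the right-continuity-with-left-limits assumption on $\omega$ is what guarantees that the requisite one-sided derivatives $\wo_1,\wo_2$ exist (cf.\ the Remark above), so that the formula is well posed. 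A secondary technical point is the separate treatment of the $W(0)$ atom and the interchange of the time integral with the excursion-level integral, which I would justify by the compensation formula together with monotone/dominated convergence. As an independent check one can verify directly, via a Feynman--Kac integral equation for $x\mapsto\ro(x;dz,dy)$ with killing rate $\omega(Y)$ and Dirichlet conditions at the exit of $X$ from $(0,b)$, that the proposed density solves it, the specialization $\omega\equiv0$ providing a convenient sanity test.
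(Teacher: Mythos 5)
Your proposal is correct, but it takes a genuinely different route from the paper. The paper never invokes excursion theory: its proof of Proposition \ref{lem:3} starts from the Feynman--Kac identity \eqref{eqn:feykac}, uses the Markov property to turn the weighted resolvent into a perturbation equation $Rf-\ro f=R(\omega\ro)f$ against the unweighted resolvent of Lemma \ref{lem:2}, exploits the structure of $(S,Y)$ below a fixed maximum level (Remark \ref{rmk:2}, i.e.\ Proposition \ref{prop:1} combined with the reflection duality of Lemma \ref{lem:1}) to collapse this to the one-dimensional linear Volterra equation \eqref{eqn:ro:3} in $x$, and then solves that equation explicitly with the integrating factor $\ho$. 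You instead factorize $\ro(x;dz,dy)$ by the compensation formula into a climb-survival factor times an excursion occupation measure, get the survival factor from the exponential formula with killing rate $\kappa(z)=-\wo_{2}(z)/\wo(z)=\ho'(z)/\ho(z)$, and compute the occupation factor as an entrance-law limit. Both of your factors are correct, and your anticipated computation does close: by Remark \ref{rmk:2} the weighted resolvent of $X$ on $(0,z)$ started from $z-\epsilon$, read in the variable $y=z-X_{t}$, has density $\frac{\wo(z,\epsilon)}{\wo(z)}\wo(y)-\wo(y,\epsilon)$, so dividing by $\epsilon$ and letting $\epsilon\downarrow 0$ gives exactly $\frac{\wo_{2}(z)}{\wo(z)}\wo(y)-\wo_{2}(y)$, while the same limit applied to $\bE_{z-\epsilon}\big(e^{-\int_{0}^{\tau_{z}^{+}}\omega(z-X_{s})\,ds};\tau_{z}^{+}<\tau_{0}^{-}\big)=\wo(z,\epsilon)/\wo(z)$ yields your rate $\kappa$. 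What your route buys is probabilistic transparency: it explains why $\ho$ is an exponential (of excursion killing rates), why the density factorizes over the maximum level $z$, and it delivers formula \eqref{eqn:thm1:b+} of Theorem \ref{thm:1} as a byproduct rather than a consequence. What the paper's route buys is economy of justification: no local-time normalization, compensation formula, or entrance-law identification needs to be defended; everything is resolvent algebra plus solving a first-order equation, with Lemma \ref{lem:2} proved by simply differentiating the classical two-sided exit resolvent in the upper barrier.

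Two points in your sketch need care in execution. First, your side remark that in $-\wo_{2}(z)=W'(z)+\int_{0-}^{z}\wo(z,u)\omega(u)W(du)$ the term $W'(z)$ ``is the ruin rate'' is not literally the excursion decomposition: with local time normalized as $S$, the ruin rate is $W'(z)/W(z)$, not $W'(z)/\wo(z)$, and the weighted splitting of $\kappa(z)$ does not match your two terms one for one; the reliable way to identify $\kappa$ is the dual exit formula quoted above, not term-by-term bookkeeping in \eqref{wo_2}. Second, in the bounded variation case the Lebesgue time spent \emph{at} the running maximum contributes an extra $\omega(0)W(0)$ to the killing rate (visible in \eqref{wo_2} through the atom of $W(du)$ at $0$); this is automatically contained in the dual-exit computation, but a naive ``excursions only'' exponential formula would drop it. Neither point breaks your argument --- they only pin down where the rigor must come from, namely from Proposition \ref{prop:1} and Lemma \ref{lem:1}, which is the same external input the paper itself relies on.
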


\begin{thm}\label{thm:1} For any $b>0$ and $x\in[0,b]$, we have
\begin{equation}\label{eqn:thm1:b+}
\bE_{x}\Big(\exp\Big(-\int_{0}^{\tau_{b}^{+}} \omega(Y_{t})\,dt\Big);
\tau_{b}^{+}<\tau_{0}^{-}\Big)=
\frac{\ho(x)}{\ho(b)},
\end{equation}
and
\begin{equation}\label{eqn:thm1:c-}
\begin{split}
&\ \bE_{x}\Big(\exp\Big(-\int_{0}^{\tau_{0}^{-}} \omega(Y_{t})\,dt\Big);
\tau_{0}^{-}<\tau_{b}^{+}\Big)\\
=&\ \zoh(x)- \frac{\ho(x)}{\ho(b)}\zoh(b)
+ \int_{x}^{b} \frac{\ho(x)}{\ho(z)}\big(\zoh_{1}(z)+\zoh_{2}(z)\big)\,dz.
\end{split}
\end{equation}
\end{thm}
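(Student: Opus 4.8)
The plan is to derive both identities from the potential measure in Proposition~\ref{lem:3}. I would establish the upward identity \eqref{eqn:thm1:b+} first by a renewal argument, and then feed it into an elementary mass--balance relation together with Proposition~\ref{lem:3} to obtain \eqref{eqn:thm1:c-}.

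\textbf{Step 1: the upward identity via the strong Markov property.} Write $f(x,z):=\bE_{x}\big(e^{-L(\tau_{z}^{+})};\tau_{z}^{+}<\tau_{0}^{-}\big)$ for $x\le z\le b$, so that \eqref{eqn:thm1:b+} asserts $f(x,b)=\ho(x)/\ho(b)$. The key is a renewal identity for the potential measure. Before $\tau_{z}^{+}$ the running maximum stays strictly below $z$, so every contribution to $\{S_{t}\in dz\}$ occurs after $\tau_{z}^{+}$; since $X$ has no positive jumps we have $X_{\tau_{z}^{+}}=z$, hence the post-$\tau_{z}^{+}$ state is exactly $(S,Y)=(z,0)$. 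Factoring $e^{-L(t)}=e^{-L(\tau_{z}^{+})}e^{-(L(t)-L(\tau_{z}^{+}))}$ and applying the strong Markov property gives
\[
\ro(x;dz,dy)=f(x,z)\,\ro(z;dz,dy),\qquad x<z<b,
\]
with the analogous relation for the atom $\ro(\cdot;dz,\{0\})$. Dividing the expression \eqref{eqn:thm1:ro+} for starting point $x$ by the same expression for starting point $z$ (for which the prefactor equals $\ho(z)/\ho(z)=1$), the common $z$-dependent bracket cancels and I read off $f(x,z)=\ho(x)/\ho(z)$ for a.e.\ $z\in(x,b)$; the atom gives the identical ratio. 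By continuity of $f(x,\cdot)$ and of $\ho$ this extends to every $z\in[x,b]$, and taking $z=b$ yields \eqref{eqn:thm1:b+}.

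\textbf{Step 2: a mass--balance relation.} Since $X$ leaves the bounded interval in finite time, $\tau:=\tau_{b}^{+}\wedge\tau_{0}^{-}<\infty$ a.s., while $\{\tau_{b}^{+}=\tau_{0}^{-}\}$ is null. From $\frac{d}{dt}e^{-L(t)}=-\omega(Y_{t})e^{-L(t)}$ one gets $e^{-L(\tau)}=1-\int_{0}^{\tau}\omega(Y_{t})e^{-L(t)}\,dt$, and taking $\bE_{x}$ and splitting the exit event gives
\[
\bE_{x}\big(e^{-L(\tau_{b}^{+})};\tau_{b}^{+}<\tau_{0}^{-}\big)+\bE_{x}\big(e^{-L(\tau_{0}^{-})};\tau_{0}^{-}<\tau_{b}^{+}\big)=1-\bE_{x}\Big(\int_{0}^{\tau}\omega(Y_{t})e^{-L(t)}\,dt\Big).
\]
By Tonelli the last expectation is $\int\omega(y)\,\ro(x;dz,dy)$, and because $S_{t}\ge x$ this measure is carried by $z\in(x,b)$. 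Using \eqref{eqn:thm1:ro+}, the atom at $y=0$, and the definitions of $\zoh$, $\zoh_{1}$, $\zoh_{2}$, the inner $y$-integral collapses (the $\omega(0)W(0)$ terms cancel) to
\[
\bE_{x}\Big(\int_{0}^{\tau}\omega(Y_{t})e^{-L(t)}\,dt\Big)=\int_{x}^{b}\frac{\ho(x)}{\ho(z)}\Big(\frac{\wo_{2}(z)}{\wo(z)}\big(\zoh(z)-1\big)-\zoh_{2}(z)\Big)\,dz.
\]

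\textbf{Step 3: integration by parts and conclusion.} Writing $h:=1/\ho$, so that $h'=h\,\wo_{2}/\wo$ and $(\zoh-1)'=\zoh_{1}$ a.e., an integration by parts over $[x,b]$ converts the last display into $\frac{\ho(x)}{\ho(b)}(\zoh(b)-1)-(\zoh(x)-1)-\int_{x}^{b}\frac{\ho(x)}{\ho(z)}(\zoh_{1}(z)+\zoh_{2}(z))\,dz$. Substituting this and \eqref{eqn:thm1:b+} into the balance relation of Step~2 and collecting the $\ho(x)/\ho(b)$ and constant terms gives exactly \eqref{eqn:thm1:c-}. The delicate point is Step~1: making the ``fresh start at the maximum'' rigorous, i.e.\ justifying that on $\{S_{t}\in dz\}$ the global running maximum coincides with that of the shifted process after $\tau_{z}^{+}$, so that the potential measure factorizes (including the atom at $Y=0$). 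The analytic input in Step~3 is routine once one notes that $\ho$ is $C^{1}$ and $\zoh-1$ is absolutely continuous under the standing smoothness assumption on $W'$, even though $\omega$ is only right-continuous.
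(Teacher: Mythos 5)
Your proposal is correct, and it splits into a part that coincides with the paper's proof and a part that is genuinely different. For the downward identity \eqref{eqn:thm1:c-}, your Steps 2 and 3 \emph{are} the paper's argument: your mass--balance relation is exactly the paper's identity \eqref{eqn:c-:1}, the cancellation of the $\omega(0)W(0)$ terms between the density part and the atom of $\ro(x;dz,dy)$ is the same bookkeeping, and the integration by parts against $1/\ho$ is the same computation (one small imprecision: $\ho$ is not $C^{1}$ in general, since $\wo_{2}$ inherits the jumps of $\omega$; it is absolutely continuous with right-derivative $-\ho\,\wo_{2}/\wo$, which is all the integration by parts needs, and is also all the paper uses). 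The genuine difference is Step 1. The paper proves \eqref{eqn:thm1:b+} by a second Feynman--Kac computation: it writes $1-e^{-L(t)}=\int_{0}^{\infty}\omega(Y_{s})e^{-L(s)}\mathbf{1}_{(s<t)}\,ds$ (its \eqref{eqn:feykac:2}), applies the Markov property to obtain the linear relation \eqref{eqn:b+:2} between $\mathcal{A}^{(\omega)}$, the unweighted ratio $W(z-y)/W(b)$ and the measure $\ro(x;dz,dy)$, then substitutes Proposition \ref{lem:3} and integrates by parts via $\wo_{2}/\wo=-(\log\ho)'$. You instead factorize the potential measure by the strong Markov property at $\tau_{z}^{+}$: absence of positive jumps forces $(S,Y)$ to restart from $(z,0)$, so $\ro(x;\cdot,\cdot)$ restricted to $\{S\geq z\}$ equals $f(x,z)\,\ro(z;\cdot,\cdot)$, and comparison with Proposition \ref{lem:3} (most cleanly done by integrating both measures over $[z,b)\times[0,\infty)$, whose common factor is finite and strictly positive, rather than by comparing densities pointwise) yields $f(x,z)=\ho(x)/\ho(z)$ for every $z\in(x,b)$, and then $z\uparrow b$ by continuity. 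Your route buys a probabilistic explanation of the product structure of \eqref{eqn:thm1:ro+}: the prefactor $\ho(x)/\ho(z)$ is literally the renewal factor $\bE_{x}\big(e^{-L(\tau_{z}^{+})};\tau_{z}^{+}<\tau_{0}^{-}\big)$, and no second Feynman--Kac step is needed. The cost is the extra care you rightly flag: justifying that the reflected pair starts afresh at $\tau_{z}^{+}$ (here $S_{\tau_{z}^{+}}=X_{\tau_{z}^{+}}=z$ is what makes it work) and the passage to $z=b$, for which $\tau_{z}^{+}\uparrow\tau_{b}^{+}$ a.s.\ (continuity of $S$) together with bounded convergence suffices. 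The paper's route is more computational but uniform --- it reuses the exact template of the proof of Proposition \ref{lem:3} --- and involves no limiting or null-set argument.
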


\begin{rmk}
We remark that for a locally bounded measurable and nonnegative function $\omega$, the partial derivatives   of $\wo(\cdot,\cdot)$ and $\zoh(\cdot,\cdot) $  exist Lebesgue a.e. Then the results in Theorem \ref{thm:1} still hold with the right partial derivatives replaced with the respective partial derivatives.
\end{rmk}

If $\omega\equiv q$ for some $q>0$,
then $\wo(x,y)=W^{(q)}(x-y)$ and one can check that $\D \ho(u)=\frac{W^{(q)}(u)}{W^{(q)}(1)}$.
Therefore, expression \eqref{eqn:thm1:b+} reduces to the classical Laplace transform for the two-sided passage problem.
On the other hand,
\begin{align*}
&\ \zoh(x,y)= 1+q \int_{y}^{x}W^{(q)}(z-y)\,dz=Z^{(q)}(x-y),\\
&\ \zoh(x)= Z^{(q)}(x)\quad\text{and}\quad\zoh_{1}(x,y)+\zoh_{2}(x,y)=0,
\end{align*}
and \eqref{eqn:thm1:c-} also coincides with the corresponding classical result for the two-sided exit problem.

For the two-sided exit problems, one may also be interested in the time
$\tau_{c}^{-}$ instead of $\tau_{0}^{-}$  for some $c<b$.
Since $X$ is spatially homogenous, with a shift argument applied,
we have following results with general initial value.

\begin{cor} For any $x\in[c,b]$, we have
\begin{equation}
\bE_{x}\Big(\exp\big(-\int_{0}^{\tau_{b}^{+}} \omega(Y_{t})\,dt\big);
\tau_{b}^{+}<\tau_{c}^{-}\Big)=
\frac{\ho(x-c)}{\ho(b-c)},
\end{equation}
and
\begin{equation}
\begin{split}
&\ \bE_{x}\Big(\exp\big(-\int_{0}^{\tau_{c}^{-}} \omega(Y_{t})\,dt\big);
\tau_{c}^{-}<\tau_{b}^{+}\Big)\\
=&\ \zoh(x-c)- \frac{\ho(x-c)}{\ho(b-c)}\zoh(b-c)\\
&\quad + \int_{x}^{b} \frac{\ho(x-c)}{\ho(z-c)}\big(\zoh_{1}(z-c)+\zoh_{2}(z-c)\big)\,dz.
\end{split}
\end{equation}
\end{cor}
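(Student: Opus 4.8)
The plan is to deduce the Corollary from Theorem \ref{thm:1} using spatial homogeneity together with the translation-invariance of the reflected process $Y$, so that essentially no new computation is needed. First I would introduce the shifted process $\tilde X := X - c$. Being a spatial translation of $X$, it is again a spectrally negative L\'evy process with the \emph{same} Laplace exponent $\psi$; it therefore has the same scale function $W$, and since the weight $\omega$ is unchanged, the same auxiliary functions $\wo$, $\zoh$ and $\ho$. Under $\bP_x$ the process $\tilde X$ starts from $x-c$, i.e.\ it has law $\bP_{x-c}$. Its running maximum is $\tilde S_t = S_t - c$, so its reflection from the running maximum satisfies $\tilde Y_t := \tilde S_t - \tilde X_t = (S_t-c)-(X_t-c) = S_t - X_t = Y_t$. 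This identity $\tilde Y \equiv Y$ is the crux of the argument.

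Next I would match the exit times and the occupation functional. Directly from the definitions, $\inf\{t:\tilde X_t > b-c\} = \tau_b^+$ and $\inf\{t:\tilde X_t < 0\} = \tau_c^-$, so the two-sided exit of $\tilde X$ out of $[0,b-c]$ is the same event as the exit of $X$ out of $[c,b]$. Because $\tilde Y = Y$, the weighted occupation time is also unchanged, $\int_0^t \omega(\tilde Y_s)\,ds = \int_0^t \omega(Y_s)\,ds$. Hence the two expectations in the Corollary are, term for term, the quantities of Theorem \ref{thm:1} written for $\tilde X$ with barriers $0$ and $b-c$ and admissible initial point $x-c\in[0,b-c]$ (the constraint $x-c\ge 0$ being exactly $x\ge c$).

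It then remains to apply Theorem \ref{thm:1} to $\tilde X$ and translate back. For the first identity the right-hand side $\ho(x)/\ho(b)$ becomes $\ho(x-c)/\ho(b-c)$, as claimed. For the second, Theorem \ref{thm:1} yields
\[
\zoh(x-c) - \frac{\ho(x-c)}{\ho(b-c)}\zoh(b-c) + \int_{x-c}^{b-c} \frac{\ho(x-c)}{\ho(w)}\big(\zoh_{1}(w)+\zoh_{2}(w)\big)\,dw,
\]
and the substitution $w = z-c$ turns $\int_{x-c}^{b-c}\,dw$ into $\int_x^b\,dz$ with integrand $\frac{\ho(x-c)}{\ho(z-c)}(\zoh_{1}(z-c)+\zoh_{2}(z-c))$, reproducing the stated formula.

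The one point that genuinely requires care---and the place where the argument could break if applied carelessly---is that $\omega$ is evaluated along $Y$ rather than along $X$. It is precisely because $Y = S - X$ is invariant under the shift $X \mapsto X-c$ that the \emph{same} $\omega$ (and hence the same $\wo,\zoh,\ho$) governs both pictures; had the weight depended on $X$ itself, the shift would replace $\omega$ by $\omega(\cdot+c)$ and the clean reduction would fail. Once this invariance is observed, I expect no further obstacle: the result is immediate from Theorem \ref{thm:1}.
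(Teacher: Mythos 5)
Your proof is correct and is exactly the argument the paper intends: the paper justifies this Corollary in one line by invoking spatial homogeneity of $X$ and a shift, which is precisely your reduction $\tilde X = X - c$, the invariance $\tilde Y \equiv Y$, and the application of Theorem \ref{thm:1} with barriers $0$ and $b-c$. Your write-up simply makes explicit the details (matching of exit times, change of variable in the integral) that the paper leaves to the reader.
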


The Gerber-Shiu penalty function is of great interest in ruin theory,
which describes  the joint distribution of
the time of ruin, the surplus before ruin as well as the deficit at ruin.
It has been generalized to different forms.
With the occupation time near the running maximum taken into consideration,
we  have following version of Gerber-Shiu function.

\begin{prop}[Gerber-Shiu function]\label{prop:3}
For any $x,z\in[0,b]$ and $y>0$, we have
\begin{equation}\label{eqn:prop3:1}
\begin{split}
&\ \bE_{x}\big(e^{-\delta \tau_{0}^{-}-L(\tau_{0}^{-})};
X(\tau_{0}^{-}-)\in\,dz, |X(\tau_{0}^{-})|\in\,dy, S(\tau_{0}^{-})<b\big)\\
=&\ \left(\frac{\hoq(x)}{\hoq(b)}\woq(b-z)- \woq(x-z)\right) \Pi(dy+z)\,dz\\
&\quad -\Big(\int_{x}^{b} \frac{\hoq(x)}{\hoq(u)}
\big(\woq_{1}(u-z)+\woq_{2}(u-z)\big)\,du\Big) \Pi(dy+z)\,dz.
\end{split}
\end{equation}
\end{prop}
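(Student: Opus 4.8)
The plan is to absorb the discount rate $\delta$ into the weight function, apply the compensation formula to express the Gerber--Shiu quantity through the occupation-weighted resolvent of Proposition \ref{lem:3}, and then integrate by parts to reach \eqref{eqn:prop3:1}. First I would observe that, since $L(t)=\int_{0}^{t}\omega(Y_{s})\,ds$,
\[
\delta\,\tau_{0}^{-}+L(\tau_{0}^{-})=\int_{0}^{\tau_{0}^{-}}\big(\omega(Y_{s})+\delta\big)\,ds,
\]
so that $e^{-\delta\tau_{0}^{-}-L(\tau_{0}^{-})}$ is precisely the occupation functional for the weight $\omega+\delta$. Hence every object in Proposition \ref{lem:3} and Theorem \ref{thm:1} has a verbatim $(\omega+\delta)$-analogue, which accounts for the superscripts in \eqref{eqn:prop3:1}; in particular the resolvent density and its atom may be used with $\omega$ replaced by $\omega+\delta$.

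Since $X$ is spectrally negative, on $\{S(\tau_{0}^{-})<b\}$ with deficit $y>0$ ruin occurs by a downward jump carrying $X$ from $X(\tau_{0}^{-}-)=z\ge 0$ to $X(\tau_{0}^{-})=-y$, i.e. by a jump of size $z+y$. Applying the compensation formula (L\'evy system) to the predictable integrand
\[
e^{-\int_{0}^{s}(\omega(Y_{r})+\delta)\,dr}\,\mathbf{1}(s\le\tau_{0}^{-}\wedge\tau_{b}^{+})\,\mathbf{1}(X_{s-}\in dz,\,S_{s-}<b),
\]
I would obtain
\[
\bE_{x}\big(e^{-\delta\tau_{0}^{-}-L(\tau_{0}^{-})};X(\tau_{0}^{-}-)\in dz,|X(\tau_{0}^{-})|\in dy,S(\tau_{0}^{-})<b\big)=G(z)\,\Pi(dy+z)\,dz,
\]
where the factor $\Pi(dy+z)$ arises because a jump carrying level $z$ below $-y$ has size in $z+dy$, and $G(z)\,dz$ is the $(\omega+\delta)$-weighted occupation density of the position $X$ at level $z$ on $\{S<b\}$ up to $\tau_{0}^{-}\wedge\tau_{b}^{+}$.

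Then I would read off $G$ from Proposition \ref{lem:3}. Writing $u=S_{s-}$ for the running maximum and using $Y=u-z$, the map $(u,Y)\mapsto(u,z)$ has unit Jacobian, so
\begin{equation*}
\begin{split}
G(z)=&\ \int_{\max(x,z)}^{b}\frac{\hoq(x)}{\hoq(u)}\Big(\frac{\woq_{2}(u)}{\woq(u)}\woq(u-z)-\woq_{2}(u-z)\Big)\,du\\
&\ +\frac{\hoq(x)}{\hoq(z)}W(0)\,\mathbf{1}(z\ge x),
\end{split}
\end{equation*}
the last term being the contribution of the atom $R^{(\omega+\delta)}(x;du,\{0\})$ at $Y=0$, present only when $z\ge x$ and only if $W(0)>0$. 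Now \eqref{defn:ho} gives $\hoq{}'(u)/\hoq(u)=-\woq_{2}(u)/\woq(u)$, whence $\frac{\woq_{2}(u)}{\woq(u)}\frac{1}{\hoq(u)}=\frac{d}{du}\frac{1}{\hoq(u)}$. Integrating the first term by parts with $\frac{d}{du}\woq(u-z)=\woq_{1}(u-z)$ and combining the two integrals, I would reach
\begin{equation*}
\begin{split}
G(z)=&\ \frac{\hoq(x)}{\hoq(b)}\woq(b-z)-\hoq(x)\frac{\woq(\max(x,z)-z)}{\hoq(\max(x,z))}\\
&\ -\int_{\max(x,z)}^{b}\frac{\hoq(x)}{\hoq(u)}\big(\woq_{1}(u-z)+\woq_{2}(u-z)\big)\,du+\frac{\hoq(x)}{\hoq(z)}W(0)\,\mathbf{1}(z\ge x).
\end{split}
\end{equation*}

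Finally I would match this with \eqref{eqn:prop3:1} by cases. For $z<x$ the atom is absent and $\max(x,z)=x$, giving \eqref{eqn:prop3:1} at once. For $z\ge x$ the lower boundary term equals $\hoq(x)W(0)/\hoq(z)$ because $\woq(0)=W(0)$, and it cancels the atom term; since in addition $\woq(x-z)$, $\woq_{1}(u-z)$ and $\woq_{2}(u-z)$ vanish for $u<z$, the lower limit $\max(x,z)=z$ may be rewritten as $x$, again yielding \eqref{eqn:prop3:1}. The step I expect to be most delicate is precisely this bookkeeping at the lower limit: reconciling the physical range $u\ge\max(x,z)$ of the running maximum with the uniform lower limit $x$ in the statement, and verifying that the integration-by-parts boundary term exactly cancels the $W(0)$-atom of the resolvent in the bounded-variation case. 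Establishing the compensation-formula identity rigorously---that $\tau_{0}^{-}$ is a jump time on the relevant event and that $X_{s-},S_{s-}$ may be replaced by $X_{s},S_{s}$ under the $ds$-integral---is routine.
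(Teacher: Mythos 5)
Your proposal is correct and follows essentially the same route as the paper: absorb $\delta$ into the weight function, read the occupation density of $X$ off the $(\omega+\delta)$-version of Proposition \ref{lem:3} by integrating out the reflected coordinate, integrate by parts using $\frac{d}{du}\log \hoq(u)=-\woq_{2}(u)/\woq(u)$, cancel the $W(0)$ atom against the lower boundary term (using $\woq(0)=W(0)$ and the vanishing of $\woq,\woq_{1},\woq_{2}$ on negative arguments to push the lower limit to $x$), and finish with the compensation formula. The only difference is organizational: you invoke the compensation formula first and work pointwise in $z$, whereas the paper computes the resolvent against a test function $f$ and applies the compensation formula last.
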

We can also find Laplace transform of the occupation time involving  the creeping event that occurs
when $X$ first exits  a lower level by hitting the level with positive probability,
 which is also called ruin caused by oscillation in ruin theory.
But more notations are needed and we omit it.

If $\omega$ is a step function,
a recursive expression for $\wo(x,y)$ in terms of scale function is given in (3.24) of \cite{BoLi:sub1};
 also see similar results in \cite{KuZh17} and \cite{LiZh18}.
\textcolor{red}{In this paper we present a similar result for  $\zoh(x,y)$ for $x,y\in\mathbb{R}$,
and an alternative recursive expression for $\wo(x,y)$.}
Note that $\zoh(x,y)=1$ for $x\leq y$ by definition.

Let $\{a_{k}\}_{k\geq1}$ with $a_{k+1}<a_{k}$ be a partition of $\mathbb{R}$,
and let $\{p_{k}\}_{k\geq0}$ be a sequence of nonnegative constants. Define $\omega_{0}(x):=p_{0}$ and for $n\geq1$
\begin{equation}\label{step_function}
\omega_{n}(x):= p_{0} \mathbf{1}(x\geq a_{1}) + \sum_{k=1}^{n-1} p_{k} \mathbf{1}(a_{k+1}\leq x<a_{k})
+ p_{n} \mathbf{1}(x< a_{n}).
\end{equation}
Denote by
$W_{n}(x,y):=W^{(\omega_{n})}(x,y)$ and
$\widehat{Z}_{n}(x,y):=\widehat{Z}^{(\omega_{n})}(x,y)$
the scale functions with respect to $\omega_{n}$.

\begin{prop}\label{prop:2}
$W_{0}(x,y)=W^{(p_{0})}(x-y)$ and for $n\geq 0$,
\[
W_{n+1}(x,y)=W_{n}(x,y)+ (p_{n+1}-p_{n})\int_{y}^{a_{n+1}}
W_{n}(x,z)W^{(p_{n+1})}(z-y)\,dz.
\]

$\widehat{Z}_{0}(x,y)=Z^{(p_{0})}(x-y)$ and for $n\geq 0$,
\[
\widehat{Z}_{n+1}(x,y)
=\widehat{Z}_{n}(x,y)+ (p_{n+1}-p_{n})\int_{y}^{a_{n+1}}
\widehat{Z}_{n}(x,z)W^{(p_{n+1})}(z-y)\,dz.
\]
\end{prop}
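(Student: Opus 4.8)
The plan is to deduce both recursions from a single additive perturbation identity and then specialize to the step weights. Throughout write $F^{(\omega)}$ for either $\wo$ or $\zoh$, and let $\omega_2=\omega_1+\Delta$ for a locally bounded $\Delta$. I will establish
\begin{equation}\label{eqn:pert}
F^{(\omega_2)}(x,y)=F^{(\omega_1)}(x,y)+\int_{y}^{x}F^{(\omega_1)}(x,z)\,\Delta(z)\,W^{(\omega_2)}(z,y)\,dz,
\end{equation}
the key point being that $\wo$ and $\zoh$ propagate through the \emph{same} kernel $W^{(\omega_2)}(z,y)$. Granting \eqref{eqn:pert}, Proposition \ref{prop:2} follows on taking $\omega_1=\omega_n$ and $\omega_2=\omega_{n+1}$: by \eqref{step_function} we have $\Delta(z)=(p_{n+1}-p_n)\mathbf 1(z<a_{n+1})$, which truncates the integral at $a_{n+1}$, while $\omega_{n+1}\equiv p_{n+1}$ on $(-\infty,a_{n+1})$ forces $W^{(\omega_{n+1})}(z,y)=W^{(p_{n+1})}(z-y)$ for $y\le z<a_{n+1}$ through \eqref{eqn:defn:w:1} (the integral there sees only $\omega_{n+1}$ restricted to $[y,z]$). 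This is exactly the propagator in the stated recursions. The base cases are the constant-weight evaluations $W_0(x,y)=W^{(p_0)}(x-y)$ and, from \eqref{eqn:defn:zh}, $\widehat Z_0(x,y)=1+\int_y^x p_0 W^{(p_0)}(z-y)\,dz=Z^{(p_0)}(x-y)$.

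For the $\wo$-instance of \eqref{eqn:pert} I would work from the Volterra equation \eqref{eqn:defn:w:1}. Substituting $\omega_2=\omega_1+\Delta$ presents $W^{(\omega_2)}(\cdot,y)$ as the solution of a Volterra equation with kernel $W(x-z)\omega_1(z)$ and inhomogeneity $h(x)=W(x-y)+\int_y^x W(x-z)\Delta(z)W^{(\omega_2)}(z,y)\,dz$. The resolvent kernel associated with $W(x-z)\omega_1(z)$ is $\omega_1(z)W^{(\omega_1)}(x,z)$, which is precisely a restatement of the dual equation \eqref{eqn:defn:w:2} for $\omega_1$. Applying this resolvent to $h$, interchanging the order of integration, and collapsing the inner integrals by \eqref{eqn:defn:w:2} in the form $\int_z^x W^{(\omega_1)}(x,w)\omega_1(w)W(w-z)\,dw=W^{(\omega_1)}(x,z)-W(x-z)$, makes the two integrals against $W(x-\cdot)\Delta W^{(\omega_2)}$ cancel and delivers \eqref{eqn:pert} for $F=\wo$.

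For the $\zoh$-instance I would begin from \eqref{eqn:defn:zh}, writing the integrand of $\widehat Z^{(\omega_2)}(x,y)-\widehat Z^{(\omega_1)}(x,y)$ as $\omega_1(z)\big(W^{(\omega_2)}(z,y)-W^{(\omega_1)}(z,y)\big)+\Delta(z)W^{(\omega_2)}(z,y)$. Inserting the already-proved $\wo$-identity for the difference $W^{(\omega_2)}-W^{(\omega_1)}$, interchanging the order of integration, and using $\int_w^x\omega_1(z)W^{(\omega_1)}(z,w)\,dz=\widehat Z^{(\omega_1)}(x,w)-1$ produces one term carrying the factor $\widehat Z^{(\omega_1)}(x,w)-1$ together with the leftover $\Delta W^{(\omega_2)}$ term; the two stray contributions $\mp\int_y^x\Delta(w)W^{(\omega_2)}(w,y)\,dw$ cancel, leaving \eqref{eqn:pert} for $F=\zoh$.

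The work here is bookkeeping rather than conceptual: the two Fubini interchanges and the resolvent identification are legitimate because $\wo$, $\zoh$ and $\Delta$ are locally bounded, so each integrand is integrable on the bounded triangle $\{y\le w\le z\le x\}$; note that only local boundedness is used, not the right-continuity of $\omega$. The one genuinely delicate point is the integration limit. For $F=\wo$ it is harmless, since $W^{(\omega_1)}(x,z)=0$ for $z>x$, so truncating at $a_{n+1}$ agrees with truncating at $x\wedge a_{n+1}$. For $F=\zoh$, however, $\widehat Z^{(\omega_1)}(x,z)=1$ for $z\ge x$ does \emph{not} vanish, so \eqref{eqn:pert} carries the limit $x\wedge a_{n+1}$; this reduces to the stated upper limit $a_{n+1}$ exactly in the range $x\ge a_{n+1}$ (in particular whenever $x\ge a_1$), which is the range relevant to Theorem \ref{thm:1}. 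Confirming this reduction, and checking it against a constant-weight test case, is where I would concentrate the final care.
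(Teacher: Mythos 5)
Your proposal is correct, and its route is genuinely different from the paper's. The paper never forms your two-weight perturbation identity. Instead it first proves
\begin{equation*}
\int_{y}^{x}\zoh(x,u)\,\omega(u)W(u-y)\,du=\zoh(x,y)-1,
\end{equation*}
which is \eqref{eqn:prop:1}, then integrates this against $qW^{(q)}(\cdot-c)$ and invokes \eqref{eqn:scaleiden} to obtain a comparison with a \emph{constant} weight,
\begin{equation*}
\zoh(x,y)-Z^{(q)}(x-y)=\int_{y}^{x}\zoh(x,z)\bigl(\omega(z)-q\bigr)W^{(q)}(z-y)\,dz,
\end{equation*}
namely \eqref{eqn:prop:4}; the recursion then follows by writing \eqref{eqn:prop:4} once for $\omega_{n+1}$ and once for $\omega_{n}$, both with $q=p_{n+1}$, subtracting, and using that $\omega_{n+1}$ and $\widehat Z_{n+1}(x,\cdot)$ agree with $\omega_{n}$ and $\widehat Z_{n}(x,\cdot)$ on $[a_{n+1},\infty)$. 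Note the structural difference: in \eqref{eqn:prop:4} the unknown $\omega$-function sits in the left slot and the constant-weight scale function is the kernel, whereas in your identity the roles are exchanged ($F^{(\omega_1)}(x,z)$ integrated against the kernel $W^{(\omega_2)}(z,y)$), and both weights may be arbitrary. Your approach buys generality and uniformity (one propagator identity covers $\wo$ and $\zoh$ simultaneously, with the Volterra resolvent doing the work); the paper's approach stays closer to the defining equations \eqref{eqn:defn:zh} and \eqref{eqn:defn:w:2} and avoids resolvent machinery, at the price of the auxiliary convolution step and a separate locality argument.

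Your closing worry about the upper limit is not a formality: it is precisely where the paper's statement and proof are deficient, and your $x\wedge a_{n+1}$ is the correct form. For $x<a_{n+1}$ and $p_{n+1}\neq p_{n}$ the stated $\widehat Z$-recursion is false. Take $n=0$, $y=0$, $0\le x<a_{1}$: locality of \eqref{eqn:defn:zh} gives $\widehat Z_{1}(x,0)=Z^{(p_{1})}(x)$, whereas the stated right-hand side equals
\begin{equation*}
Z^{(p_{1})}(x)+(p_{1}-p_{0})\int_{x}^{a_{1}}W^{(p_{1})}(z)\,dz
\end{equation*}
(using \eqref{eqn:prop:4} with $\omega\equiv p_{0}$, $q=p_{1}$, and $Z^{(p_{0})}(x-z)=1$ for $z\ge x$), and the extra term is nonzero. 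The paper's own proof tacitly assumes $x\ge a_{n+1}$: the step replacing $\widehat Z_{n+1}(x,z)(\omega_{n+1}(z)-p_{n+1})$ by $\widehat Z_{n}(x,z)(\omega_{n}(z)-p_{n+1})$ over the range from $a_{n+1}$ to $x$ is justified only when that range lies in $[a_{n+1},\infty)$; when $x<a_{n+1}$ the original integrand vanishes on $(x,a_{n+1})$ but the substituted one does not. So the $\widehat Z$-half of Proposition \ref{prop:2} should carry the upper limit $x\wedge a_{n+1}$, or be restricted to $x\ge a_{n+1}$; the $\wo$-half is fine as stated, exactly as you observe, because $W_{n}(x,z)=0$ for $z>x$. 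One small correction to your last sentence: $x\ge a_{1}$ is not the only range relevant to Theorem \ref{thm:1}, since $\zoh(x,0)$ is needed for all $x\in[0,b]$, including $x<a_{1}$; but in that range locality gives $\widehat Z_{n+1}(x,0)=Z^{(p_{n+1})}(x)$ directly, so nothing is lost by your restriction.
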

Noticing that for  $y>a_{n+1}$,
we have $\widehat{Z}_{n+1}(x,y)=\widehat{Z}_{n}(x,y)$ from the equation above,
since $W^{(p_{n+1})}(z-y)=0$ for $z\in(a_{n+1},y)$,
which can be observed from \eqref{eqn:prop:1} and used in our proofs.

Considering a special case that
\begin{equation}\label{step_function_a}
\omega(z)=p+ (q-p) \mathbf{1}(a_{2}\leq z<a_{1})
\end{equation}
 for some $q\geq p\geq 0$ and $0\leq a_{2}<a_{1}$,
which is the weight function first considered in \cite{Loeffen2014:occupationtime:levy} and \cite{Zhou2014:occupationtime:levy},
where the occupation time of intervals of spectrally negative L\'{e}vy process is studied and
 the following auxiliary functions are introduced.
\begin{align*}
W^{(p,q)}_{(a_{2})}(x):=&\ W^{(p)}(x)+(q-p)\int_{a_{2}}^{x}W^{(q)}(x-z)W^{(p)}(z)\,dz\\
=&\ W^{(q)}(x)- (q-p) \int_{0}^{a_{2}}W^{(q)}(x-z)W^{(p)}(z)\,dz,\\
W^{(p,q,p)}_{(a_{2},a_{1})}(x):=&\ W^{(p,q)}_{(a_{2})}(x)+ (p-q) \int_{a_{1}}^{x}W^{(p)}(x-z)W^{(p,q)}_{(a_{2})}(z)\,dz.
\end{align*}
It was pointed out  in \cite{BoLi:sub1} that
$W^{(p,q,p)}_{(a_{2},a_{1})}(x)=\wo(x,0)$ for the function $\omega$ defined above.

Note that $W^{(\omega_{u})}(u-y,u-x)=\wo(x,y)$ for every $u\in\mathbb{R}$ by Lemma \ref{lem:1},
where
\begin{equation*}
\begin{split}
\omega_{u}(z)&=\omega(u-z)=p+ (q-p) \mathbf{1}(a_{2}<u-z<a_{1})\\
&=p+ (q-p) \mathbf{1}(u-a_{1}<z<u-a_{2}).
\end{split}
\end{equation*}
Since
$\D \wo_{2}(x,y)=-W^{(\omega_{u})}_{1}(u-y,u-x)$,
taking $x=u=t$ and $y=0$ we have
\[
\frac{\wo_{2}(t)}{\wo(t)}
=\frac{\wo_{2}(t,0)}{\wo(t,0)}
=- \frac{W_{1}^{(\omega_{t})}(t,0)}{W^{(\omega_{t})}(t,0)}
=- \frac{W_{(t-a_{1},t-a_{2})}^{(p,q,p)\prime}(t)}{W_{(t-a_{1},t-a_{2})}^{(p,q,p)}(t)}.
\]
Therefore, we obtain the following alternative expression for the function $\omega$ given in (\ref{step_function_a}).
\[
\bE_{x}\Big(\exp\big(-\int_{0}^{\tau_{b}^{+}} \omega(Y_{t})\,dt\big);
\tau_{b}^{+}<\tau_{0}^{-}\Big)=
\exp\Big( -\int_{x}^{b}
\frac{W_{(t-a_{2},t-a_{1})}^{(p,q,p)\prime}(t)}{W_{(t-a_{2},t-a_{1})}^{(p,q,p)}(t)}\,dt
\Big).
\]
In addition,  by Proposition \ref{prop:2} function $\zoh\equiv\widehat{Z}^{(p,q,p)}_{(a_{2},a_{1})}$ for the above mentioned weight function $\omega$ is given by
\[\widehat{Z}^{(q,p)}_{(a_{1})}(x,y):=
Z^{(p)}(x-y)+ (q-p)\int_{y}^{a_{1}} Z^{(p)}(x-z)W^{(q)}(z-y)\,dz\]
 and
\[\widehat{Z}^{(p,q,p)}_{(a_{2},a_{1})}(x,y):=
\widehat{Z}^{(q,p)}_{(a_{1})}(x,y)+ (p-q)\int_{y}^{a_{2}} \widehat{Z}^{(q,p)}_{(a_{1})}(x,z)W^{(p)}(z-y)\,dz.\]
Then a more explicit expression for \eqref{eqn:thm1:c-} also follows.

To further simplify the expression,
let $a_{1}=a$ and $a_{2}=0$ for some $a>0$,
that is
\[\omega(z)=q\mathbf{1}(0\leq z<a)+p \mathbf{1}(z\geq a), \,\,\, z\geq0.\]
We have by definitions that for $t\geq0$,
\begin{align*}
W_{(t-a)}^{(p,q)}(t)
=&\ W^{(p)}(x)+ (q-p) \int_{t-a}^{t}W^{(q)}(t-z)W^{(p)}(z)\,dz\\
=&\ W^{(p)}(x)+ (q-p)  \int_{0}^{a} W^{(p)}(t-s)W^{(q)}(s)\,ds
=W_{(a)}^{(q,p)}(t)\\
=&\ W^{(q)}(x)+ (p-q) \int_{a}^{t} W^{(p)}(t-s)W^{(q)}(s)\,ds,\\
\widehat{Z}^{(q,p)}_{(a)}(t)=&\ Z^{(p)}(t)+ (q-p)\int_{0}^{a}Z^{(p)}(t-s)W^{(q)}(s)\,ds\\
=&\ Z^{(q)}(t)+ (p-q)\int_{a}^{t}Z^{(p)}(t-s)W^{(q)}(s)\,ds.
\end{align*}

\begin{exam}
Let $X=\mu t+\sigma B_{t}$ be a Brownian surplus process,
where $\mu$ and $\sigma>0$ are constants and $B_{t}$ is a standard Brownian motion.
It is known that for $x>0$
\[
\begin{aligned}
W^{(q)}(x)=&\ \frac{e^{\rho_{1}x}- e^{\rho_{2}x}}{D\cdot (\rho_{1}-\rho_{2})}
& \text{and}&&
W^{(p)}(x)=&\ \frac{e^{\eta_{1}x}- e^{\eta_{2}x}}{D\cdot (\eta_{1}-\eta_{2})},\\
Z^{(q)}(x)=&\ \frac{\rho_{2}e^{\rho_{1}x}-\rho_{1} e^{\rho_{2}x}}{\rho_{2}-\rho_{1}}
&\text{and}&&
Z^{(p)}(x)=&\ \frac{\eta_{2}e^{\eta_{1}x}-\eta_{1} e^{\eta_{2}x}}{\eta_{2}-\eta_{1}},
\end{aligned}
\]
where $D=\sigma^{2}/2$,
$\rho_{1}>\rho_{2}$ are the two roots to equation $D s^{2}+ \mu s=q$
and $\eta_{1}>\eta_{2}$ are the two roots to equation $D s^{2}+ \mu s=p$.
We thus have
\begin{align*}
&\ W_{(t-a)}^{(p,q)}(t)=W^{(q,p)}_{(a)}(t)
=W^{(q)}(t)+ (p-q)\int_{a}^{t}W^{(p)}(t-s)W^{(q)}(s)\,ds\\
=&\ \frac{e^{\rho_{1}t}- e^{\rho_{2}t}}{D\cdot (\rho_{1}-\rho_{2})}+
\frac{(p-q)\mathbf{1}(t>a)}{D^{2}(\rho_{1}-\rho_{2})(\eta_{1}-\eta_{2})}
\int_{a}^{t}
\big(e^{\eta_{1}(t-s)}-e^{\eta_{2}(t-s)}\big)
\big(e^{\rho_{1}s}-e^{\rho_{2}s}\big)\,ds\\
=&\ \frac{e^{\rho_{1}t}- e^{\rho_{2}t}}{D\cdot (\rho_{1}-\rho_{2})}+
\frac{(p-q)\mathbf{1}(t>a)}{D^{2}(\rho_{1}-\rho_{2})(\eta_{1}-\eta_{2})}
\Big(\frac{e^{\rho_{1}t}-e^{\rho_{1}a+\eta_{1}(t-a)}}{\rho_{1}-\eta_{1}}
+ \frac{e^{\rho_{2}t}-e^{\rho_{2}a+\eta_{1}(t-a)}}{\eta_{1}-\rho_{2}}\\
&\quad+ \frac{e^{\rho_{1}t}-e^{\rho_{1}a+\eta_{2}(t-a)}}{\eta_{2}-\rho_{1}}
+  \frac{e^{\rho_{2}t}-e^{\rho_{2}a+\eta_{2}(t-a)}}{\rho_{2}-\eta_{2}}\Big).
\end{align*}
Similarly, we have the following expression for $\zoh(t)$.
\begin{align*}
&\ \widehat{Z}_{(a)}^{(q,p)}(t)=
Z^{(q)}(t)+ (p-q)\int_{a}^{t} Z^{(p)}(t-s)W^{(q)}(s)\,ds\\
=&\  \frac{\rho_{2}e^{\rho_{1}t}-\rho_{1} e^{\rho_{2}t}}{\rho_{2}-\rho_{1}}
+ \frac{(p-q)\mathbf{1}(t>a)}{D(\eta_{2}-\eta_{1})(\rho_{1}-\rho_{2})}
\int_{a}^{t}
(\eta_{2}e^{\eta_{1}(t-s)}- \eta_{1} e^{\eta_{2}(t-s)})
(e^{\rho_{1}s}-e^{\rho_{2}s})\,ds\\
=&\ \frac{\rho_{2}e^{\rho_{1}t}-\rho_{1} e^{\rho_{2}t}}{\rho_{2}-\rho_{1}}
+ \frac{(p-q)\mathbf{1}(t>a)}{D(\eta_{2}-\eta_{1})(\rho_{1}-\rho_{2})}
\Big(\frac{\eta_{2}(e^{\rho_{1}t}-e^{\rho_{1}a+\eta_{1}(t-a)})}{\rho_{1}-\eta_{1}}
+ \frac{\eta_{2}(e^{\rho_{2}t}-e^{\rho_{2}a+\eta_{1}(t-a)})}{\eta_{1}-\rho_{2}}\\
&\quad+ \frac{\eta_{1}(e^{\rho_{1}t}-e^{\rho_{1}a+\eta_{2}(t-a)})}{\eta_{2}-\rho_{1}}
+  \frac{\eta_{1}(e^{\rho_{2}t}-e^{\rho_{2}a+\eta_{2}(t-a)})}{\rho_{2}-\eta_{2}}\Big).
\end{align*}
\end{exam}


\begin{exam}
Let $X$ be a Cram\'er-Lundberg surplus process with exponentially distributed claim sizes,
that is, $\D \psi(s)=\mu s- \frac{\lambda s}{s+\beta}$ for some constants $\mu, \lambda, \beta>0$.
Then we have for $x>0$
\[
\begin{aligned}
W^{(q)}(x)=&\ \frac{\beta+\rho_{1}}{\mu(\rho_{1}-\rho_{2})}
e^{\rho_{1}x}+ \frac{\beta+\rho_{2}}{\mu(\rho_{2}-\rho_{1})} e^{\rho_{2}x},\\
W^{(p)}(x)=&\ \frac{\beta+\eta_{1}}{\mu(\eta_{1}-\eta_{2})}
e^{\eta_{1}x}+ \frac{\beta+\eta_{2}}{\mu(\eta_{2}-\eta_{1})} e^{\eta_{2}x},\\
Z^{(q)}(x)=&\ \frac{\rho_{2}(\beta+\rho_{1})}{\beta(\rho_{2}-\rho_{1})}
e^{\rho_{1}x}+ \frac{\rho_{1}(\beta+\rho_{2})}{\beta(\rho_{1}-\rho_{2})} e^{\rho_{2}x},\\
Z^{(p)}(x)=&\ \frac{\eta_{2}(\beta+\eta_{1})}{\beta(\eta_{2}-\eta_{1})}
e^{\eta_{1}x}+ \frac{\eta_{1}(\beta+\eta_{2})}{\beta(\eta_{1}-\eta_{2})} e^{\eta_{2}x},
\end{aligned}
\]
where $\rho_{1}>\rho_{2}$ are the roots to equation
$\mu s^{2}+ (\mu\beta-\lambda-q)s- q\beta=0$
and $\eta_{1}>\eta_{2}$ are the roots to equation
$\mu s^{2}+ (\mu\beta-\lambda-p)s- p\beta=0$.
We thus have
\begin{align*}
&\ W^{(q,p)}_{(a)}(t)=W^{(q)}(t)+
(p-q)\int_{a}^{t} W^{(p)}(t-s)W^{(q)}(s)\,ds\\
=&\ \frac{(\beta+\rho_{1})e^{\rho_{1}t}}{\mu(\rho_{1}-\rho_{2})}
+ \frac{(\beta+\rho_{2})e^{\rho_{2}t}}{\mu(\rho_{2}-\rho_{1})}
+ \frac{(p-q)\mathbf{1}(t>a)}{\mu^{2}(\rho_{1}-\rho_{2})(\eta_{1}-\eta_{2})}\\
&\quad\times
\Big(\frac{e^{\rho_{1}t}-e^{\rho_{1}a+\eta_{1}(t-a)}}{\rho_{1}-\eta_{1}}
(\beta+\eta_{1})(\beta+\rho_{1})
+ \frac{e^{\rho_{2}t}-e^{\rho_{2}a+\eta_{1}(t-a)}}{\eta_{1}-\rho_{2}}
(\beta+\eta_{1})(\beta+\rho_{2})\\
&\quad\quad+ \frac{e^{\rho_{1}t}-e^{\rho_{1}a+\eta_{2}(t-a)}}{\eta_{2}-\rho_{1}}
(\beta+\eta_{2})(\beta+\rho_{1})
+  \frac{e^{\rho_{2}t}-e^{\rho_{2}a+\eta_{2}(t-a)}}{\rho_{2}-\eta_{2}}
(\beta+\eta_{2})(\beta+\rho_{2})\Big).
\end{align*}
Similarly, we can also obtain the following expression for $\zoh(t)$.
\begin{align*}
&\ \widehat{Z}_{(a)}^{(q,p)}(t)=Z^{(q)}(t)+ (p-q)\int_{a}^{t}Z^{(p)}(t-s)W^{(q)}(s)\,ds\\
=&\ \frac{\rho_{2}(\beta+\rho_{1})}{\beta(\rho_{2}-\rho_{1})}
e^{\rho_{1}t}+ \frac{\rho_{1}(\beta+\rho_{2})}{\beta(\rho_{1}-\rho_{2})} e^{\rho_{2}t}
+ \frac{(q-p)\mathbf{1}(t>a)}{\mu\beta(\eta_{1}-\eta_{2})(\rho_{1}-\rho_{2})}\\
&\quad\times
\Big(\frac{e^{\rho_{1}t}-e^{\rho_{1}a+\eta_{1}(t-a)}}{\rho_{1}-\eta_{1}}
\eta_{2}(\beta+\eta_{1})(\beta+\rho_{1})
+ \frac{e^{\rho_{2}t}-e^{\rho_{2}a+\eta_{1}(t-a)}}{\eta_{1}-\rho_{2}}
\eta_{2}(\beta+\eta_{1})(\beta+\rho_{2})\\
&\quad\quad+ \frac{e^{\rho_{1}t}-e^{\rho_{1}a+\eta_{2}(t-a)}}{\eta_{2}-\rho_{1}}
\eta_{1}(\beta+\eta_{2})(\beta+\rho_{1})
+  \frac{e^{\rho_{2}t}-e^{\rho_{2}a+\eta_{2}(t-a)}}{\rho_{2}-\eta_{2}}
\eta_{1}(\beta+\eta_{2})(\beta+\rho_{2})\Big).
\end{align*}
\end{exam}

\section{Proof}\label{proof}
This section is dedicated to the proofs of our main results.
We first find an expression for
 the expected time spent by $(S,Y)$ until the ruin time for $X$ in the following lemma.

\begin{lem}\label{lem:2}
For $z> x, y\geq 0$, we have
\begin{equation*}
\begin{split}
R(x; dz,dy)&:=
\int_{0}^{\infty} \bP_{x}\big( S_{t}\in dz, Y_{t}\in dy; t<\tau_{0}^{-}\big)\,dt\\
&=\ \frac{W(x)}{W(z)} \Big(W(dy)- \frac{W'(z)}{W(z)} W(y)\,dy\Big) \,dz.
\end{split}
\end{equation*}
In particular,
\[R(x; dz, \{0\})= \frac{W(x)}{W(z)}W(0)\,dz.\]
\end{lem}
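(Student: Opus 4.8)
The plan is to use the excursion theory of the reflected process $Y=S-X$ at $0$. Since $X$ is spectrally negative it creeps upward, so $S$ is continuous and nondecreasing and increases only on $\{t:Y_t=0\}=\{t:X_t=S_t\}$; hence $Y$ is a strong Markov process reflected at $0$ and, because $\bP_x$ starts from $X_0=S_0=x$ (so $Y_0=0$), the running maximum $S$ itself serves as a local time of $Y$ at $0$. I would index the excursions of $Y$ by the level $z$ of the running maximum at which they occur and let $n$ denote the It\^o excursion measure attached to this local time. The aim is to factor $R(x;dz,dy)$ into the probability that the maximum reaches level $z$ before ruin and a purely excursion-theoretic occupation measure at that level.

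For the first factor I would use the classical two-sided identity $\bP_x(\tau_z^+<\tau_0^-)=W(x)/W(z)$. An excursion occurring at level $z$ reads in $X$-coordinates as $X=z-Y$ descending from $z$; it ends when $Y$ returns to $0$ (a new maximum) and it causes ruin precisely when its height reaches $z$, i.e. when $X$ hits $0$. Applying the compensation formula, integrating against the local time $dz$ and discarding excursions that have already triggered ruin at lower levels, I expect
\[
R(x;dz,dy)=\frac{W(x)}{W(z)}\,\mu_z(dy)\,dz,\qquad \mu_z(dy):=n\Big(\int_0^\zeta \mathbf{1}\big(\epsilon_r\in dy,\ \sup\nolimits_{s\le r}\epsilon_s<z\big)\,dr\Big),
\]
so that $\mu_z$ is the expected time an excursion spends in $dy$ before reaching the ruin threshold $z$. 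The boundary set $\{Y=0\}$ carries positive Lebesgue time only in the bounded-variation case and must be accounted for separately; it will supply the atom at $y=0$.

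The crux is the evaluation of $\mu_z$, which I would obtain from the two-sided killed resolvent of $X$ on $(0,z)$,
\[
U^{(0,z)}(w,dv):=\int_0^\infty\bP_w\big(X_t\in dv,\ t<\tau_0^-\wedge\tau_z^+\big)\,dt=\Big(\tfrac{W(w)}{W(z)}W(z-v)-W(w-v)\Big)\,dv
\]
(see \cite{Kyprianou2014:book:levy}), by letting the starting point approach the upper boundary. Under the excursion-entrance normalization, $\mu_z(dy)$ with $y=z-v$ equals $\lim_{\eta\downarrow 0}\tfrac1\eta\,U^{(0,z)}(z-\eta,dv)$. A first-order expansion of $W(z-\eta)$ and $W(z-\eta-v)$ gives $\tfrac1\eta U^{(0,z)}(z-\eta,dv)\to\big(W'(z-v)-\tfrac{W'(z)}{W(z)}W(z-v)\big)\,dv$, so that $\mu_z(dy)=\big(W'(y)-\tfrac{W'(z)}{W(z)}W(y)\big)\,dy$ for $y>0$. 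The time spent on $\{Y=0\}$ before the maximum exceeds $z$ equals $W(0)\,dz$ per unit of maximum (in the bounded-variation case $W(0)$ is the reciprocal of the drift and $dS=\mathrm{drift}\cdot dt$ at the maximum), contributing the atom $\frac{W(x)}{W(z)}W(0)\,dz$. Writing $W(dy)=W(0)\delta_0(dy)+W'(y)\,dy$, the density and the atom combine into $W(dy)-\tfrac{W'(z)}{W(z)}W(y)\,dy$, and multiplying by $W(x)/W(z)$ yields both displayed formulas.

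The main obstacle is the rigorous justification of the excursion-entrance normalization, namely that the $\tfrac1\eta$-limit of the killed resolvent started at $z-\eta$ indeed computes the $n$-occupation for the local time chosen to be $S$. I would pin this down by matching it against the excursion rate: $n(\sup\epsilon\ge z)=\lim_{\eta\downarrow0}\tfrac1\eta\,\bP_{z-\eta}(\tau_0^-<\tau_z^+)=W'(z)/W(z)$, which is exactly $\partial_z\log W(z)$, i.e. the logarithmic derivative of $1/\bP_x(\tau_z^+<\tau_0^-)$ in $z$; this confirms that the normalization is the correct one. The second delicate point is the separate treatment of the occupation on $\{Y=0\}$ and the identification of its contribution with the atom $W(0)\,dz$, which must be argued differently in the bounded- and unbounded-variation cases.
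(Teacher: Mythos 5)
Your skeleton is sound and genuinely different from the paper's argument: the factorization $R(x;dz,dy)=\frac{W(x)}{W(z)}\,\mu_z(dy)\,dz$ is a correct application of the compensation (Mecke) formula for the excursion point process of $Y=S-X$ with local time $S$, your Taylor expansion of the killed resolvent is algebraically correct, and your treatment of the atom at $y=0$ (Lebesgue time on $\{Y_t=0\}$, with $W(0)$ the reciprocal drift in the bounded-variation case and $W(0)=0$ otherwise) is fine. The genuine gap is exactly where you place it, and your proposed fix does not close it: you never prove that $\mu_z(dy)=\lim_{\eta\downarrow0}\frac1\eta U^{(0,z)}(z-\eta,\,z-dy)$. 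The consistency check --- matching $n(\sup\epsilon\ge z)=W'(z)/W(z)$ against $\lim_{\eta\downarrow0}\frac1\eta\bP_{z-\eta}(\tau_0^-<\tau_z^+)$ --- verifies agreement of two measures on a \emph{single} functional; it cannot identify the occupation measure, which is a statement about $n$ as a measure on path space. Concretely: (i) since $X$ is spectrally negative, excursions of $Y$ have \emph{upward} jumps, so under $n$ the excursion crosses level $\eta$ with an overshoot; the Markov property of $n$ at $T_\eta=\inf\{r:\epsilon_r>\eta\}$ gives the exact identity $\mu_z(dy)=\int n(\epsilon_{T_\eta}\in dw,\,T_\eta<\zeta)\,U^{(0,z)}(z-w,\,z-dy)$ for small $\eta$, and replacing the entrance measure $n(\epsilon_{T_\eta}\in dw,\,T_\eta<\zeta)$ by $\frac1\eta\delta_{\eta}(dw)$ is precisely what needs proof --- note its total mass is $W'(\eta)/W(\eta)$, which is \emph{not} asymptotic to $1/\eta$ in general (e.g.\ when $\sigma=0$ and $X$ has unbounded variation), so the two measures do not agree even asymptotically, and the claimed limit holds only because of cancellations specific to the integrand; (ii) in the bounded-variation case excursions begin with a jump and $n(\cdot)=\frac1d\int_{(0,\infty)}\Pi(du)\,\bP^Y_u(\cdot)$, so the $\frac1\eta$-limit is not a description of $n$ at all; showing it nevertheless yields the correct density requires the renewal identity $d\,W(y)=1+\int_0^y\overline{\Pi}(u)W(y-u)\,du$, i.e.\ real additional work that your write-up does not contain.

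To make your route rigorous you would need either to compute the entrance measure $n(\epsilon_{T_\eta}\in dw,\,T_\eta<\zeta)$ explicitly (via the Gerber--Shiu overshoot identity applied over the levels, treating bounded and unbounded variation separately) and then pass to the limit, or to cite known occupation identities for the excursion measure of a spectrally negative process reflected at its supremum (e.g.\ Pistorius). It is worth contrasting this with the paper's proof, which bypasses excursion theory entirely: it writes $\int_0^\infty\bE_x\big(f(X_t);t<\tau_0^-,\,S_t<b\big)\,dt=\int_0^bf(y)\big(\frac{W(x)}{W(b)}W(b-y)-W(x-y)\big)\,dy$, observes that $\{t<\tau_0^-,\,S_t<b\}$ and $\{t\le\tau_b^+\wedge\tau_0^-\}$ coincide up to Lebesgue-null sets of times, differentiates in the barrier $b$ (legitimate under the standing assumption that $W'$ exists and is continuous), and changes variables $y\mapsto z-y$. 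Everything your excursion analysis must reconstruct is already encoded in the $b$-dependence of the classical two-sided killed resolvent, which is why the paper's argument is two lines while yours, made honest, would be several pages.
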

\begin{proof}[Proof of Lemma \ref{lem:2}]
Let $f$ be a nonnegative, continuous and bounded function on $[0,b]$. Then
\begin{equation*}
\begin{split}
 \int_{0}^{\infty} \bE_{x}\big(f(X_{t}); t<\tau_{0}^{-}, S_{t}<b\big)\,dt
&= \int_{0}^{\infty} \bE_{x}\big(f(X_{t}); t\leq \tau_{b}^{+}\wedge\tau_{0}^{-}\big)\,dt \\
&= \int_{0}^{b} f(y) \Big(\frac{W(x)}{W(b)}W(b-y)-W(x-y)\Big)\,dy.
\end{split}
\end{equation*}
Differentiating in $b$ on both sides of the above equation, we have for $z>x$ and $z>y\geq0$,
\begin{align*}
&\ \int_{0}^{\infty} \bP_{x}\big(S_{t}\in dz, X_{t}\in dy, t<\tau_{0}^{-}\big)\,dt\\
=&\ \frac{W(x)}{W(z)} \Big(W'(z-y)- \frac{W'(z)}{W(z)} W(z-y)\Big) \,dzdy
+ \frac{W(x)}{W(z)}W(0) \delta_{\{z\}}(dy)\,dz
\end{align*}
where $\delta_{\{z\}}(dy)$ denotes the Dirac measure at $\{z\}$.
The result of the lemma follows by change of variable.
\end{proof}
\begin{rmk}\label{rmk:cal:1} For $z>x\geq 0$ and $0<u\leq z$, by Lemma \ref{lem:2} we obtain the following density
\begin{align*}
&\ \frac{1}{dz}
\Big(\int_{0-}^{z} \omega(y) \wo(u,y) R(x;dz,dy)\Big) \\
=&\ \int_{0-}^{z}
\frac{W(x)}{W(z)} \wo(u,y)\omega(y)
\Big(W(dy)- \frac{W'(z)}{W(z)} W(y)\,dy\Big)\\
=&\ \frac{W(x)}{W(z)} \Big(-\wo_{2}(u)-W'(u)
- \frac{W'(z)}{W(z)} (\wo(u)-W(u))\Big),
\end{align*}
where we need identity \eqref{wo_2}  and
identity \eqref{eqn:defn:w:2} for the last equality.
In particular, for $u=z$
it becomes \[\D -\wo(z)\frac{W(x)}{W(z)}
\Big(\frac{W'(z)}{W(z)}+ \frac{\wo_{2}(z)}{\wo(z)} \Big).\]
\end{rmk}

To obtain the Feymann-Kac identity,
we need the following result from \cite{BoLi:sub1} and more properties of $\wo$.

\begin{prop}\label{prop:1}
For $x\in(c,b)$ we have
\[
\bE_{x}\Big(\exp\big(-\int_{0}^{\tau_{b}^{+}}
\omega(X_{s})\,ds\big); \tau_{b}^{+}<\tau_{c}^{-}\Big)
=\frac{\wo(x,c)}{\wo(b,c)}.
\]
In addition, the $\omega$-resolvent measure of $X$ killed at $\tau_{b}^{+}\wedge\tau_{c}^{-}$ is given by for $c<y<b$,
\begin{equation}
\begin{aligned}
&\ \int_{0}^{\infty}\bE_{x}\Big(
\exp\big(-\int_{0}^{t}\omega(X_{s})\,ds\big);  t\leq \tau_{b}^{+}\wedge\tau_{c}^{-}, X_{t}\in dy\Big)\,dt\\
&=\  \Big(\frac{\wo(x,c)}{\wo(b,c)}\wo(b,y)-\wo(x,y)\Big)\,dy.
\end{aligned}
\end{equation}
\end{prop}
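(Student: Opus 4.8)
\medskip
\noindent\textbf{Proof proposal for Proposition \ref{prop:1}.}
The plan is to treat the weight $\omega$ as a state-dependent killing rate for $X$ and to reduce both assertions to the classical two-sided exit theory, i.e.\ the case $\omega\equiv 0$, with the renewal structure supplied by the defining equation \eqref{eqn:defn:w:1}. I would establish the up-exit Laplace transform first and then deduce the $\omega$-resolvent density from it together with a Feynman--Kac perturbation identity.

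For the identity $\bE_x(\exp(-\int_0^{\tau_b^+}\omega(X_s)\,ds);\tau_b^+<\tau_c^-)=\wo(x,c)/\wo(b,c)$ I would first treat step weight functions. If $\omega$ is piecewise constant, then between two consecutive deterministic thresholds at which $\omega$ jumps the process is an unkilled spectrally negative L\'evy process run with a constant discount, so the classical formula $\bE_x(e^{-q\tau_b^+};\tau_b^+<\tau_c^-)=W^{(q)}(x-c)/W^{(q)}(b-c)$ applies. Conditioning by the strong Markov property at these thresholds yields a finite recursion which is precisely the discretized form of the Volterra equation \eqref{eqn:defn:w:1}; this is exactly the recursion recorded for $\wo$ in Proposition \ref{prop:2}. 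For a general locally bounded nonnegative $\omega$ I would approximate by step functions $\omega_n$, pass to the limit on the left-hand side by dominated convergence, and use that the solution of the linear Volterra equation \eqref{eqn:defn:w:1} depends continuously on its kernel to obtain the corresponding scale functions $W^{(\omega_n)}\to\wo$ on the right.

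For the $\omega$-resolvent density I would use the Feynman--Kac (Dyson) perturbation identity. Writing $A_t:=\int_0^t\omega(X_s)\,ds$, $\D U^\omega f(x):=\int_0^\infty\bE_x(e^{-A_t}f(X_t);t\le\tau_b^+\wedge\tau_c^-)\,dt$, and $U_0$ for the unweighted killed resolvent, the elementary identity $e^{-A_t}=1-\int_0^t\omega(X_s)e^{-(A_t-A_s)}\,ds$ together with the Markov property at time $s$ gives $U^\omega f=U_0f-U_0(\omega\,U^\omega f)$. In density form, with $u_0(x,y)=\frac{W(x-c)}{W(b-c)}W(b-y)-W(x-y)$ the classical killed resolvent density, this reads
\begin{equation*}
u^\omega(x,y)=u_0(x,y)-\int_c^b u_0(x,z)\,\omega(z)\,u^\omega(z,y)\,dz .
\end{equation*}
I would then verify that the candidate $D(x,y):=\frac{\wo(x,c)}{\wo(b,c)}\wo(b,y)-\wo(x,y)$ solves this equation, the computation reducing to \eqref{eqn:defn:w:1} and its dual \eqref{eqn:defn:w:2}, and conclude by the uniqueness of the locally bounded solution. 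Alternatively, mirroring the proof of Lemma \ref{lem:2}, the density can be obtained by differentiating the integrated up-exit identity in the upper level $b$.

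The main obstacle is the lack of smoothness of $\wo$ for merely locally bounded $\omega$, which blocks a direct It\^o/generator argument showing that $e^{-A_{t\wedge\tau}}\wo(X_{t\wedge\tau},c)$ is a martingale; this is why I route the proof through step-function approximation and the integral formulation, both of which avoid differentiation. Two further points need care: the limit $\omega_n\to\omega$ requires uniform local bounds on $W^{(\omega_n)}$ (available from Gronwall-type estimates for \eqref{eqn:defn:w:1}) to justify dominated convergence, while the denominator is automatically controlled since $\wo(b,c)\ge W(b-c)>0$; and the boundary evaluation must use that $X$ creeps upward, so that $X_{\tau_b^+}=b$ on $\{\tau_b^+<\tau_c^-\}$, together with the fact that $\wo(\cdot,c)$ vanishes strictly below $c$ (immediate from \eqref{eqn:defn:w:1}, since $W$ vanishes on the negative half-line) and that $X_{\tau_c^-}<c$ unless $X$ creeps downward, in which case $W(0)=0$ and hence $\wo(c,c)=0$ as well. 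This is what makes the up-exit event contribute the factor $\wo(b,c)$ and the down-exit event contribute $0$.
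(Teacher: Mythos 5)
Your attempt should first be measured against the right target: the paper itself gives no proof of Proposition \ref{prop:1} — it is imported from \cite{BoLi:sub1} ("we need the following result from \cite{BoLi:sub1}") — and the proof given in that source runs precisely through the Feynman--Kac perturbation identity you write down for the resolvent. So your second part is on the right track, and the verification that $D(x,y)=\frac{\wo(x,c)}{\wo(b,c)}\wo(b,y)-\wo(x,y)$ solves the perturbed equation does go through (in fact only \eqref{eqn:defn:w:1} is needed, not the dual \eqref{eqn:defn:w:2}). The genuine gap is your closing step, "conclude by the uniqueness of the locally bounded solution." Your equation is of Fredholm type — the integral runs over all of $(c,b)$, not just $(c,x)$ — and such equations, unlike Volterra ones, have no automatic uniqueness: positivity of the kernel does not help (a positivity-preserving operator can have $-1$ as an eigenvalue), and the Neumann-series/iteration argument fails for large weights, since with $\tau:=\tau_{b}^{+}\wedge\tau_{c}^{-}$ and $K$ the operator with kernel $u_{0}(x,z)\omega(z)$ one has $K^{n}1(x)=\bE_{x}\big[\big(\int_{0}^{\tau}\omega(X_{s})\,ds\big)^{n}\big]/n!$, which need not tend to $0$ because $\tau$ in general has exponential moments only up to a finite order. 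The missing idea is to exploit the rank-one-plus-Volterra structure of $u_{0}(x,z)=\frac{W(x-c)}{W(b-c)}W(b-z)-W(x-z)$: any bounded solution $h$ of the homogeneous equation $h=-Kh$ satisfies $h(x)=-\frac{\gamma}{W(b-c)}W(x-c)+\int_{c}^{x}W(x-z)\omega(z)h(z)\,dz$ with the constant $\gamma:=\int_{c}^{b}W(b-z)\omega(z)h(z)\,dz$; Volterra uniqueness gives $h=-\gamma\,\wo(\cdot,c)/W(b-c)$, and substituting back into the definition of $\gamma$ and using \eqref{eqn:defn:w:1} yields $\gamma\,\wo(b,c)/W(b-c)=0$, hence $h\equiv0$ since $\wo(b,c)\geq W(b-c)>0$. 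Without this (or an equivalent) argument the resolvent identity is not established.

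Second, your route to the up-exit identity through step weights is both overstated and unnecessary. The claim that "conditioning by the strong Markov property at these thresholds yields a finite recursion" would fail as written: a spectrally negative process jumps strictly over a threshold when crossing it downward, so the post-crossing position requires a Gerber--Shiu exit measure rather than a two-sided exit probability, and the process may recross each threshold infinitely often; what one actually gets is a renewal-type system (essentially the argument of \cite{Loeffen2014:occupationtime:levy} for the weight \eqref{step_function_a}), and matching its solution to the recursion of Proposition \ref{prop:2} is further nontrivial work. The efficient alternative is to run your own perturbation scheme on $\mathcal{A}(x):=\bE_{x}\big(\exp\big(-\int_{0}^{\tau_{b}^{+}}\omega(X_{s})\,ds\big);\tau_{b}^{+}<\tau_{c}^{-}\big)$ directly: Feynman--Kac plus the Markov property give
\[
\frac{W(x-c)}{W(b-c)}-\mathcal{A}(x)=\int_{c}^{b}u_{0}(x,z)\omega(z)\mathcal{A}(z)\,dz,
\]
the same rank-one-plus-Volterra decomposition shows $\mathcal{A}=\kappa\,\wo(\cdot,c)$ for a constant $\kappa$, and the boundary value $\mathcal{A}(b)=1$ (upward regularity, as you note) pins down $\kappa=1/\wo(b,c)$. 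This short argument — essentially how \cite{BoLi:sub1} proceeds — yields both identities in one stroke and removes the step-function approximation together with its limit interchanges and uniform bounds.
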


\begin{lem}\label{lem:1}
Let $\omega$ be a nonnegative locally bounded measurable function,
$\wo$ and $W^{(\omega_{u})}$ be scale functions with respect to
$\omega$ and $\omega_{u}$, respectively,
where $\omega_{u}(z):=\omega(u-z)$ for some $u\in\mathbb{R}$,
that is
\begin{equation}\label{eqn:lem:wo}
W^{(\omega_{u})}(x,y)=W(x-y)
+\int_{y}^{x} W(x-z)\omega(u-z)W^{(\omega_{u})}(z,y)\,dz
\end{equation}
for $x,y\in\mathbb{R}$. Then \[W^{(\omega_{u})}(u-y,u-x)=\wo(x,y).\]
\end{lem}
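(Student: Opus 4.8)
The plan is to show that the reflected function $g(x,y):=W^{(\omega_u)}(u-y,u-x)$ satisfies the defining integral equation \eqref{eqn:defn:w:1} of the $\omega$-scale function, and then to invoke the uniqueness of its locally bounded solution to conclude that $g\equiv\wo$. This reduces the identity to a change-of-variables verification.

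First I would record the dual form of the defining equation for $W^{(\omega_u)}$. Applying Lemma 2 of \cite{ZhouBo:localtime} to the weight $\omega_u$ (in the same way \eqref{eqn:defn:w:2} follows from \eqref{eqn:defn:w:1}) gives, for all $a,b$,
\[
W^{(\omega_u)}(a,b)=W(a-b)+\int_b^a W^{(\omega_u)}(a,z)\,\omega_u(z)\,W(z-b)\,dz.
\]
I would then substitute $a=u-y$ and $b=u-x$, so that $a-b=x-y$ and the condition $a>b$ is equivalent to $x>y$. The left-hand side becomes $g(x,y)$ and the free term becomes $W(x-y)$.

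The key step is the reflection change of variable $w=u-z$ in the remaining integral. Under it one has $\omega_u(z)=\omega(u-z)=\omega(w)$, the kernel $W(z-b)=W(x-w)$, and the limits $z\in(u-x,u-y)$ become $w\in(y,x)$; moreover $W^{(\omega_u)}(u-y,u-w)=g(w,y)$ directly from the definition of $g$. Collecting these the dual equation transforms into
\[
g(x,y)=W(x-y)+\int_y^x W(x-w)\,\omega(w)\,g(w,y)\,dw,
\]
which is exactly equation \eqref{eqn:defn:w:1} with $g$ in place of $\wo$. For $x\le y$ both $g(x,y)$ and $\wo(x,y)$ collapse to $W(x-y)$, so the identity holds there as a trivial base case.

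Since $\wo$ is by definition the unique locally bounded function solving \eqref{eqn:defn:w:1}, and $g$ is locally bounded (being the composition of the locally bounded $W^{(\omega_u)}$ with the affine reflection $(x,y)\mapsto(u-y,u-x)$), uniqueness forces $g\equiv\wo$, that is $W^{(\omega_u)}(u-y,u-x)=\wo(x,y)$. The only delicate point is the bookkeeping of the reflection $z\mapsto u-z$: one must track the orientation of the limits of integration carefully and confirm that $\omega_u(u-\cdot)=\omega(\cdot)$ reproduces the original weight with no spurious Jacobian or sign factors surviving. I expect this orientation and sign bookkeeping, together with the brief justification of local boundedness of $g$, to be the only real obstacle; everything else is a direct matching against \eqref{eqn:defn:w:1}.
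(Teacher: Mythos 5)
Your proof is correct, and it is essentially the paper's reflection-plus-uniqueness argument run in the mirror direction. The paper starts from the primal equation \eqref{eqn:lem:wo} for $W^{(\omega_{u})}$ (the hypothesis of the lemma), applies the change of variable $z\mapsto u-z$, and recognizes the \emph{dual} equation \eqref{eqn:defn:w:2} for the weight $\omega$, concluding by uniqueness of solutions to \eqref{eqn:defn:w:2}. You instead first invoke Lemma 2 of \cite{ZhouBo:localtime} for the weight $\omega_{u}$ to obtain the dual equation satisfied by $W^{(\omega_{u})}$, reflect, and land on the \emph{primal} equation \eqref{eqn:defn:w:1} for $\omega$, concluding by the uniqueness that is built into the definition of $\wo$. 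The trade-off is minor but real: your route needs one extra appeal to the cited lemma (for the new weight $\omega_{u}$), but then only uses the definitional uniqueness of \eqref{eqn:defn:w:1}; the paper's route uses nothing beyond the equation given in the statement, but must rely on uniqueness of locally bounded solutions to the dual equation \eqref{eqn:defn:w:2}, a standard Volterra fact that the paper does not record explicitly. Your bookkeeping of the substitution $w=u-z$ (limits $(u-x,u-y)$ mapping to $(y,x)$ with the sign absorbed by the orientation swap, $\omega_{u}(z)=\omega(w)$, $W(z-(u-x))=W(x-w)$, and $W^{(\omega_{u})}(u-y,u-w)=g(w,y)$) is accurate, and the local boundedness of $g$ under the affine reflection is immediate, so there is no gap.
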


\begin{proof}[Proof of Lemma \ref{lem:1}]
Denoting by $g(x,y):=W^{(\omega_{u})}(u-y,u-x)$
and by change of variable,
we have from \eqref{eqn:lem:wo} that
\begin{align*}
g(x,y)=&\ W(x-y)+
\int_{u-x}^{u-y}W(u-y-z)\omega(u-z) W^{(\omega_{u})}(z,u-x)\,dz\\
=&\ W(x-y)+
\int_{y}^{x}g(x,z) \omega(z)W(z-y)\,dz.
\end{align*}
It shows that $g$ satisfies the equation \eqref{eqn:defn:w:2}, which
 concludes the proof by the uniqueness of solution to \eqref{eqn:defn:w:2}.
\end{proof}

\begin{rmk}\label{rmk:2}
Combining  Lemma \ref{lem:1} and Proposition \ref{prop:1} gives
\[
\ \bE_{b-x}\Big(\exp\big(-\int_{0}^{\tau_{b}^{+}}\omega(b-X_{s})\,ds\big); \tau_{b}^{+}<\tau_{0}^{-}\Big)
=\frac{\wo(b,x)}{\wo(b,0)}\]
and
\begin{equation*}
\begin{split}
&\ \int_{0}^{\infty} \bE_{b-x}\Big(\exp\big(-\int_{0}^{t}\omega(b-X_{s})\,ds\big) f(b-X_{t});
t<\tau_{b}^{+}\wedge\tau_{0}^{-}\Big)\,dt\\
&=\ \int_{0}^{b}f(y)
\Big(\frac{\wo(b,x)}{\wo(b,0)}\wo(y,0)- \wo(y,x)\Big)\,dy,
\end{split}
\end{equation*}
which are the corresponding  results for the dual process of $X$.
\end{rmk}

Since both processes $X$ and $Y$ are involved in our problems,
we need to consider the joint probability law of processes $S$ and $Y$.
We slightly abuse notations in the following discussion.

Here, for some $u\in\mathbb{R}$ and $v\geq0$,
let $\D S_{t}:=u\vee \sup_{s\in[0,t]} X_{s}$ be the running maximum process of $X$,
and $Y:=S_{t}-X_{t}$ be the  process reflected from the running maximum.
The two-dimensional process
$Z\equiv\{Z_{t}, t\geq0\}:=
\{(S_{t},  Y_{t}),t\geq0\}$ is still a Markov process.
The law of $Z$ starting from $(u,v)$ is denoted by
$\bP_{u,v}:=\bP\big(\cdot\big|(S_{0}, Y_{0})=(u, v)\big)$
and the corresponding expectation is denoted by $\bE_{u,v}$.
We write $\bP_{x}:=\bP_{x,0}=\bP(\cdot|X_{0}=x)$ for $x\in\mathbb{R}$
which reduces to the  probability law of $X$ given $X_{0}=x$.

\begin{proof}[Proof of Proposition \ref{lem:3}]
 Define an operator on
the space of continuous and bounded functions
by
\[
\ro f(u,v):=
\int_{0}^{\infty} \bE\Big(e^{-L(t)} f(S_{t},Y_{t}); t\leq \tau_{b}^{+}\wedge\tau_{0}^{-}\Big|(S_{0},Y_{0})=(u,v)\Big)\,dt,
\]
for $(u,v)\in\mathbb{R}\times\mathbb{R}^{+}$.
Write
$$\ro f(u):=\ro f(u,0)\,\,\,\text{ and}\,\,\, Rf(u,v):=\ro f(u,v)\,\,\,\text{ for}\,\,\, \omega\equiv 0.$$
Then $Rf(u)=R^{(0)}f(u,0)$ whose expression can be found  in Lemma \ref{lem:2}.

Firstly, by the additivity of $L$, we have for every $t>0$,
\begin{equation}\label{eqn:feykac}
1- e^{-L(t)}=e^{-L(t)}\int_{0}^{t} e^{L(s)}\omega(Y_{s})\,ds
=\int_{0}^{\infty} \omega(Y_{s}) e^{-L(t-s)\circ \theta_{s}} \mathbf{1}_{(s<t)}\,ds,
\end{equation}
where $\theta_{\cdot}$ denotes the shift operator and $\mathbf{1}_{A}$ denotes the indicator function.
Plugging it into the following equation and
applying the Markov property at time $s>0$,
we have
\begin{equation}\label{eqn:ro:2}
\begin{split}
&Rf(x)-\ro f(x)\\
&=
\int_{0}^{\infty} \bE_{x,0}\Big(\big(1-e^{-L(t)}\big) f(S_{t},Y_{t}); t<\tau_{b}^{+}\wedge\tau_{0}^{-}\Big)\,dt\\
&=
\int_{0}^{\infty}\int_{0}^{\infty}
\bE_{x,0}\Big( \omega(Y_{s}) \big(e^{-L(t-s)} f(S_{t-s},Y_{t-s})\big)\circ\theta_{s}; s<t<\tau_{b}^{+}\wedge\tau_{0}^{-}\Big)\,ds\,dt\\
&=\ \int_{x}^{b} \int_{0-}^{z} R(x; dz,dy)\omega(y) \ro f(z,y)\\
&=: R(\omega \ro)f(x).
\end{split}
\end{equation}

On the other hand, under $\bP_{z,y}$ we have $X_{0}=z-y$.
The absence of positive jumps gives
$S_{\tau_{z}^{+}}=z$ on the set $\{\tau_{z}^{+}<\infty\}$
and $S_{t}=z$, $Y_{t}=z-X_{t}$ for $t<\tau_{z}^{+}$.
Conditioning on $\tau_{z}^{+}\wedge\tau_{0}^{-}$, we have
\begin{align*}
\ro&  f(z,y)
=\bE\Big(\exp\big(-\int_{0}^{\tau_{z}^{+}}\omega(z-X_{s})\,ds\big); \tau_{z}^{+}<\tau_{0}^{-}
\Big|X_{0}=z-y\Big)\cdot \ro f(z)\\
&\ + \int_{0}^{\infty}
\bE\Big(\exp\big(-\int_{0}^{t}\omega(z-X_{s})\,ds\big) f(z,z-X_{t}); t<\tau_{z}^{+}\wedge\tau_{0}^{-} \Big|X_{0}=z-y\Big)\,dt.
\end{align*}
By  the discussion in Remark \ref{rmk:2},  the identity above reduces to
\begin{equation}\label{eqn:ro:1}
\begin{aligned}
\ro f(z,y)=&\ \frac{\wo(z,y)}{\wo(z,0)}
\Big(\ro f(z)+ \int_{0}^{z} f(z,u) \wo(u,0)\,du\Big)\\
&\quad - \int_{0}^{z} f(z,u) \wo(u,y)\,du.
\end{aligned}
\end{equation}

Plugging it into the definition of $R(\omega \ro)f$  gives
\begin{align*}
R(\omega \ro)& f(x)=
 \int_{x}^{b} \int_{0-}^{z} R(x;dz,dy)\omega(y)
\Big(- \int_{0}^{z}f(z,u)\wo(u,y)\,du\\
&\quad+  \frac{\wo(z,y)}{\wo(z,0)}
\big(\ro f(z)+ \int_{0}^{z} f(z,u)\wo(u,0)\,du\big)\Big).
\end{align*}
By Remark \ref{rmk:cal:1}
and $\wo(z,0)=\wo(z)$, we further have
\begin{align*}
R&(\omega \ro) f(x)=
\int_{x}^{b}\int_{0}^{z} f(z,u) \frac{W(x)}{W(z)}
\Big(W'(u)- \frac{W'(z)}{W(z)}W(u)\Big)dudz\\
&\quad +
\int_{x}^{b}\int_{0}^{z} f(z,u) \frac{W(x)}{W(z)}
\Big(\wo_{2}(u)+ \frac{W'(z)}{W(z)}\wo(u)\Big)dudz\\
&\quad -\int_{x}^{b}
\Big(\ro f(z)+ \int_{0}^{z} f(z,u)\wo(u)\,du\Big)
\frac{W(x)}{W(z)}
\Big(\frac{\wo_{2}(z)}{\wo(z)}+ \frac{W'(z)}{W(z)}\Big)\,dz.
\end{align*}
Notice that  the resolvent density of $R(x;dz,du)$ for $u>0$ in Lemma \ref{lem:2} appears in the first integrand of the above equation.
Comparing it with the left hand side of \eqref{eqn:ro:2} and applying Lemma \ref{lem:2} gives the following integral equation on $\frac{\ro f(x)}{W(x)} $.
\begin{equation}\label{eqn:ro:3}
\begin{split}
 \frac{\ro f(x)}{W(x)}
&=  \frac{Rf(x)- R(\omega \ro)f(x)}{W(x)}\\
&= \int_{x}^{b}
\Big(\frac{W'(z)}{W(z)}+ \frac{\wo_{2}(z)}{\wo(z)} \Big)
\frac{\ro f(z)}{W(z)}\,dz
+ W(0) \int_{x}^{b}  \frac{f(z,0)}{W(z)} \,dz\\
&\quad+  \int_{x}^{b}\int_{0}^{z} \frac{f(z,u)}{W(z)}
 \Big( \frac{\wo_{2}(z)}{\wo(z)}\wo(u)-\wo_{2}(u) \Big)\,dudz.
\end{split}
\end{equation}

Recalling the definition of $\ho$ in \eqref{defn:ho},
one can check that
\[
\frac{\ho(x)}{W(x)}=
\frac{1}{W(1)}-\int_{1}^{x}\Big(\frac{W'(z)}{W(z)}+ \frac{\wo_{2}(z)}{\wo(z)} \Big)
\frac{\ho(z)}{W(z)}\,dz.
\]
Solving the equation \eqref{eqn:ro:3} for $\frac{\ro f(x)}{W(x)} $, we have that for $x\in[0,b]$,
\begin{align*}
\ro f(x)=&\ \int_{x}^{b}\int_{0}^{z} f(z,u)
\frac{\ho(x)}{\ho(z)}
\Big(\frac{\wo_{2}(z)}{\wo(z)}\wo(u)- \wo_{2}(u)\Big)\,dudz\\
&\quad + W(0) \int_{x}^{b} f(z,0) \frac{\ho(x)}{\ho(z)}\,dz,
\end{align*}
which gives the desired formula \eqref{eqn:thm1:ro+}.
\end{proof}

We are now ready to prove the  main results.

\begin{proof}[Proof of Theorem \ref{thm:1}]
We take use of the following version of the Feynman-Kac identity:
\begin{equation}
\label{eqn:feykac:2}
1- e^{-L(t)}=\int_{0}^{t} e^{-L(s)}\omega(Y_{s})\,ds
=\int_{0}^{\infty} \omega(Y_{s})  e^{-L(s)} \mathbf{1}_{(s<t)}\,ds.
\end{equation}
To simplify notations we denote  for $u\in[0,b]$ and $u\geq v\geq 0$
 $$
 \mathcal{A}^{(\omega)}(u,v):=
\bE\big(e^{-L(\tau_{b}^{+})}; \tau_{b}^{+}<\tau_{0}^{-}\big|(S_{0},Y_{0})=(u,v)\big),\\
$$
and write $\mathcal{A}^{(\omega)}(u,0):=\mathcal{A}^{(\omega)}(u)$.
It holds  that for $\omega\equiv0$
\begin{equation}\label{eqn:b+:1}
\mathcal{A}(u,v):=\mathcal{A}^{(0)}(u,v)=\bP_{u-v}\big(\tau_{b}^{+}<\tau_{0}^{-}\big)=\frac{W(u-v)}{W(b)}.
\end{equation}
Similar to \eqref{eqn:ro:2}, but with \eqref{eqn:feykac}  replaced by \eqref{eqn:feykac:2},
we have from the Markov property that
\begin{equation}\label{eqn:b+:2}
\mathcal{A}(x)- \mathcal{A}^{(\omega)}(x)
=\int_{x}^{b} \int_{0-}^{z} \ro(x;dz,dy) \omega(y) \mathcal{A}(z,y).
\end{equation}
Plugging \eqref{eqn:b+:1} and \eqref{eqn:thm1:ro+} into the equation gives
$$
\mathcal{A}^{(\omega)}(x)
=\frac{W(x)}{W(b)}+ \int_{x}^{b}
\frac{\ho(x)}{\ho(z)}
\Big( \frac{W'(z)}{W(b)}
+\frac{\wo_{2}(z)}{\wo(z)} \frac{W(z)}{W(b)}\Big)\,dz.
$$
Noticing that
$$\D \frac{\wo_{2}(z)}{\wo(z)}
=\frac{d}{dz}\Big(-\log \ho(z)\Big),$$
 we further have
$$
\mathcal{A}^{(\omega)}(x)=
 \frac{W(x)}{W(b)}+
\Big(\frac{\ho(x)}{\ho(z)} \frac{W(z)}{W(b)}\Big)\Big|_{z=x}^{z=b}
=\frac{\ho(x)}{\ho(b)},
$$
which proves the formula \eqref{eqn:thm1:b+}.

We proceed to show formula (\ref{eqn:thm1:c-}).
Observe first that
\begin{equation}
\label{eqn:c-:1}
\bE_{x,0}\left(\int_{0}^{\infty} e^{-L(t)}\omega(Y_{t}) \mathbf{1}_{\{ t<\tau_{b}^{+}\wedge\tau_{0}^{-}\}}\,dt\right)
=1- \bE_{x,0}\left(e^{-L(\tau_{b}^{+}\wedge\tau_{0}^{-})}\right).
\end{equation}
On the other hand,
by \eqref{eqn:thm1:ro+} and integration by parts  the left hand side of \eqref{eqn:c-:1} is equal to
\begin{equation*}
\begin{split}
 &
 \int_{x}^{b} \frac{\ho(x)}{\ho(z)}
 \Big(\frac{\wo_{2}(z)}{\wo(z)}
 \big(\zoh(z)-1\big)- \zoh_{2}(z) \Big)\,dz\\
&=\ \frac{\ho(x)}{\ho(z)}(\zoh(z)-1)\Big|_{x}^{b}
- \int_{x}^{b} \frac{\ho(x)}{\ho(z)}(\zoh_{1}(z)+\zoh_{2}(z))\,dz\\
&=\ 1- \frac{\ho(x)}{\ho(b)}- \Big(\zoh(x)- \frac{\ho(x)}{\ho(b)}\zoh(b)\Big)\\
&\quad\ - \int_{x}^{b} \frac{\ho(x)}{\ho(z)}\big(\zoh_{1}(z)+\zoh_{2}(z)\big)\,dz.
\end{split}
\end{equation*}
Comparing with (\ref{eqn:c-:1}) gives the formula (\ref{eqn:thm1:c-}).
We thus completes the proof of Theorem \ref{thm:1}.
\end{proof}

\begin{proof}[Proof of Proposition \ref{prop:3}]
Observing that
$$\D \delta t+ L(t)=\int_{0}^{t}(\delta+\omega(X_{s}))\,ds$$ for every $t>0$,
the result for general case of $\delta>0$
can be derived from that of the special case $\delta=0$,
with the weight function replaced by $\omega+\delta$.
We thus focus on the case $\delta=0$ in the proof.

To this end, we first evaluate,
for a continuous $f\geq0$ on $[0,b]$,
\begin{equation}\label{eqn:p3:1}
\begin{split}
&\ \int_{0}^{\infty} \bE_{x}\big(e^{-L(t)}f(X_{t}); t<\tau_{b}^{+}\wedge \tau_{0}^{-}\big)\,dt\\
=&\ \int_{x}^{b}f(z-y) \,dz \int_{0}^{z}
\frac{\ho(x)}{\ho(z)} \Big(\frac{\wo_{2}(z)}{\wo(z)}\wo(y)-\wo_{2}(y)\Big)\,dy\\
&\quad + W(0)\int_{x}^{b}f(z)\frac{\ho(x)}{\ho(z)}\,dz
\end{split}
\end{equation}
with Proposition \ref{lem:3} applied.
Changing the order of integrals, the first term of the right hand side of \eqref{eqn:p3:1} equals to
\begin{align*}
&\ \int_{x}^{b}\,dz \int_{0}^{z} f(y)
\frac{\ho(x)}{\ho(z)} \Big(\frac{\wo_{2}(z)}{\wo(z)}\wo(z-y)-\wo_{2}(z-y)\Big)\,dy\\
=&\ \int_{0}^{b} f(y)\,dy \int_{x\vee y}^{b}
\frac{\ho(x)}{\ho(z)} \Big(\frac{\wo_{2}(z)}{\wo(z)}\wo(z-y)-\wo_{2}(z-y)\Big)\,dz.
\end{align*}
Noticing that $$\frac{d}{dx}\ho(x)=-\ho(x)\frac{\wo_{2}(x)}{\wo(x)},$$ we further have
\begin{align*}
&\ \int_{x\vee y}^{b}
\frac{\ho(x)}{\ho(z)} \Big(\frac{\wo_{2}(z)}{\wo(z)}\wo(z-y)-\wo_{2}(z-y)\Big)\,dz\\
=&\ \frac{\ho(x)}{\ho(z)}\wo(z-y)\Big|_{z=x\vee y}^{z=b}
- \int_{x\vee y}^{b} \frac{\ho(x)}{\ho(z)}\big(\wo_{1}(z-y)+\wo_{2}(z-y)\big)\,dz.
\end{align*}
Since $\wo(z-y)=0$ for $z<y$, we  have
\[
\frac{\ho(x)}{\ho(z)}\wo(z-y)\Big|_{z=x\vee y}^{z=b}
=\frac{\ho(x)}{\ho(b)}\wo(b-y)-\wo(x-y)- W(0)\frac{\ho(x)}{\ho(y)}\mathbf{1}(y>x).
\]
Plugging into \eqref{eqn:p3:1} and using the fact that $\wo_{1}(z-y)=\wo_{2}(z-y)=0$ for $z<y$,
we further have
\begin{equation}
\begin{split}
&\ \int_{0}^{\infty} \bE_{x}\big(e^{-L(t)}f(X_{t}); t<\tau_{b}^{+}\wedge \tau_{0}^{-}\big)\,dt\\
=&\ \int_{0}^{b}
\Big(\frac{\ho(x)}{\ho(b)}\wo(b-y)-\wo(x-y)\Big) f(y)\,dy\\
&\quad - \int_{0}^{b}f(y)
\Big(\int_{x}^{b} \frac{\ho(x)}{\ho(z)}\big(\wo_{1}(z-y)+\wo_{2}(x-y)\big)\,dz\Big)\,dy.
\end{split}
\end{equation}
The formula \eqref{eqn:prop3:1} then follows from the compensating formula.
\end{proof}

For the proof of Proposition \ref{prop:2},
we apply \eqref{eqn:defn:w:2} for $\wo$, and definition
\eqref{eqn:defn:zh} for $\zoh$. We also need  the following identity
which first appears in \cite{Loeffen2014:occupationtime:levy}.
\begin{equation}
\label{eqn:scaleiden}
W^{(q)}(x-y)=W(x-y)+ q \int_{y}^{x}W(x-z)W^{(q)}(z-y)\,dz.
\end{equation}
\begin{proof}[Proof of Proposition \ref{prop:2}]
Firstly, we have from \eqref{eqn:defn:zh} for $\zoh$ and the Fubini's theorem that
\begin{align}
&\ \int_{y}^{x}\zoh(x,u)\omega(u)W(u-y)\,du\non\\
=&\  \int_{y}^{x}\omega(u)W(u-y)\,du
+ \iint_{x>u>v>y}\omega(u)\wo(u,v)\omega(v)W(v-y)\,dudv\non\\
=&\  \int_{y}^{x}\omega(u)
\Big(W(u-y)+ \int_{y}^{u}\wo(u,v)\omega(v)W(v-y)\,dv\Big)\,du\non\\
=&\ \int_{y}^{x}\omega(u)\wo(u,y)\,du=\zoh(x,y)-1\label{eqn:prop:1}
\end{align}
where identity \eqref{eqn:defn:w:2} for $\wo$ is used for the second to the last equation.

On the other hand, for any $q\geq0$,
integrating on both sides of the equation above with respect to $W^{(q)}(y-c)\,dy$ over interval $(c,x)$,
by the scale function identity \eqref{eqn:scaleiden} we have
\begin{align}
&\ q\int_{c}^{x}\big(\zoh(x,y)-1\big)W^{(q)}(y-c)\,dy\non\\
=&\ q\int_{c}^{x} \big(\int_{y}^{x} \zoh(x,u)\omega(u)W(u-y)\,du\big) W^{(q)}(y-c)\,dy\non\\
=&\ q\int_{c}^{x} \zoh(x,u)\omega(u) \int_{c}^{u}W(u-y)W^{(q)}(y-c)\,dy\non\\
=&\ \int_{c}^{x}\zoh(x,u)\omega(u) (W^{(q)}(u-c)-W(u-c))\,du.\label{eqn:prop:2}
\end{align}
Combining \eqref{eqn:prop:1}, \eqref{eqn:prop:2}  and
\[Z^{(q)}(x-y)=1+ q \int_{y}^{x} W^{(q)}(z-y)\,dz,\]
 we have
\begin{align}
\zoh(x,y)-Z^{(q)}(x-y)=\int_{y}^{x}\zoh(x,z)(\omega(z)-q)W^{(q)}(z-y)\,dz.\label{eqn:prop:4}
\end{align}

We now consider the case that $\omega$ is a step function given by (\ref{step_function}).
Given $\widehat{Z}_{n}$ on $\mathbb{R}\times\mathbb{R}$,
 since $\omega_{n}(y)=\omega_{n+1}(y)$ for  $y\geq a_{n+1}$,
it follows from \eqref{eqn:prop:1} that  $\widehat{Z}_{n+1}(x,y)= \widehat{Z}_{n}(x,y)$.
For $y<a_{n+1}$, taking $q=p_{n+1}$ in the equation \eqref{eqn:prop:4},
and noticing that $\D \omega_{n+1}(z)-p_{n+1}=0$ for $z< a_{n+1}$
and $\D \omega_{n+1}(z)=\omega_{n}(z)$ for $z\geq a_{n+1}$,
we have
\begin{equation*}
\begin{split}
\widehat{Z}_{n+1} (x,y)
&=\ \widehat{Z}^{(p_{n+1})}(x-y)+ \int_{y}^{x} \widehat{Z}_{n+1}(x,z)(\omega_{n+1}(z)-p_{n+1})W^{(p_{n+1})}(z-y)\,dz\\
&=\ \widehat{Z}^{(p_{n+1})}(x-y)+\int_{a_{n+1}}^{x} \widehat{Z}_{n+1}(x,z)(\omega_{n+1}(z)-p_{n+1}) W^{(p_{n+1})}(z-y)\,dz\\
&=\ \widehat{Z}^{(p_{n+1})}(x-y)+\int_{a_{n+1}}^{x} \widehat{Z}_{n}(x,z)(\omega_{n}(z)-p_{n+1}) W^{(p_{n+1})}(z-y)\,dz\\
&=\ \widehat{Z}_{n}(x,y)- \int_{y}^{a_{n+1}} \widehat{Z}_{n}(x,z)(\omega_{n}(z)-p_{n+1})W^{(p_{n+1})}(z-y)\,dz\\
&=\ \widehat{Z}_{n}(x,y)+ (p_{n+1}-p_{n})\int_{y}^{a_{n+1}}\widehat{Z}_{n}(x,z)W^{(p_{n+1})}(z-y)\,dz,
\end{split}
\end{equation*}
where identity \eqref{eqn:prop:4} is used for the first and the fourth equation.
The proof is thus completed.

The recursive expression for $W_n(x,y)$ can be proved similarly.
\end{proof}

\medskip

\noindent
{\bf Acknowledgement} Yun Hua and Bo Li thank Concordia University where the first draft of this paper was completed during their visit in early 2017.

\bibliographystyle{apalike}

\end{document}